\definecolor{refkey}{rgb}{0,0,1}
\definecolor{labelkey}{rgb}{1,0,0}
\newenvironment{claim}[1][{\textup{(\theequation)}}]{\refstepcounter{equation}\vglue10pt
\begin{trivlist}
\item[{\hskip\labelsep#1}]}{\vglue10pt\end{trivlist}}
\theoremstyle{plain}
\newtheorem{theorem}{Theorem}[chapter]
\newtheorem{proposition}[theorem]{Proposition}
\newtheorem{corollary}[theorem]{Corollary}
\theoremstyle{definition}
\theoremstyle{remark}
\newtheorem{remark}[theorem]{Remark}
\newtheorem{conjecture}[theorem]{Conjecture}
\newtheoremstyle{plainfoot}%
 {\item[\hskip\labelsep \theorem@headerfont ##1\ ##2\,\footnotemark\theorem@separator]}%
 {\item[\hskip\labelsep \theorem@headerfont ##1\ ##2\ (##3)\, \footnotemark\theorem@separator]}
\theoremstyle{plainfoot}
\newtheorem{theorem-foot}[theorem]{Theorem}
\newtheorem{lemma-foot}[theorem]{Lemma}
\newtheorem{proposition-foot}[theorem]{Proposition}
\newtheorem{corollary-foot}[theorem]{Corollary}
\newtheorem{conjecture-foot}[theorem]{Conjecture}
\newtheorem{condition-foot}[condition]{Condition}
\theoremstyle{plainfoot}
\newtheorem{definition-foot}[theorem]{Definition}
\newtheorem{Problem-foot}[theorem]{Problem}
\theoremstyle{plainfoot}
\newtheorem{remark-foot}[theorem]{Remark}     
\newtheorem{example-foot}[theorem]{Example}
\newtheorem{problem-foot}[theorem]{Problem}
\numberwithin{equation}{chapter}
\newcommand{\bR}{\mathbb{R}}
\newcommand{\bS}{\mathbb{S}}
\newcommand{\cK}{\mathcal{K}}
\newcommand{\cV}{\mathcal{V}}
\newcommand{\cW}{\mathcal{W}}
\newcommand\sC{\mathscr{C}}
\newcommand\sL{\mathscr{L}}
\newcommand\sH{\mathscr{H}}
\newcommand{\supp}{\operatorname{supp}}
\newcommand{\Ker}{\operatorname{Ker}}
\newcommand{\N}{\mathsf{N}}
\renewcommand{\Re}{\operatorname{Re}}
\title{Complete Differentiable Semiclassical Spectral Asymptotics\thanks{\emph{2010 Mathematics Subject Classification}: 35P20.}\thanks{\emph{Key words and phrases}: Microlocal Analysis, differentiable complete  spectral asymptotics.}
}
\author{Victor Ivrii\thanks{This research was supported in part by National Science and Engineering Research Council (Canada) Discovery Grant RGPIN 13827}}
\begin{document}

\maketitle

\begin{abstract}
For an operator $A\coloneqq A_h= A^0(hD) +  V(x,hD),$ with a ``potential'' $V$ decaying as $|x|\to \infty$ we establish under certain assumptions the complete and differentiable with respect to $\tau$ asymptotics of $e_h(x,x,\tau)$ where $e_h(x,y,\tau)$ is the Schwartz kernel of the spectral projector.
\end{abstract}

\chapter{Introduction}
\label{sect-1}

Consider a self-adjoint matrix operator
\begin{gather}
A\coloneqq A_h= A^0(hD) +  V(x,hD),
\label{eqn-1.1}\\
\shortintertext{where}
| D^\beta_\xi A^0(\xi)|\le c_{\alpha\beta}(|\xi|+1)^m \qquad \forall \beta  \ \forall \xi
\label{eqn-1.2}\\
\shortintertext{and}
A^0(\xi)\ge c_0 |\xi|^m - C_0 \qquad \qquad \ \forall \xi.
\label{eqn-1.3}
\end{gather}
We assume that $A^0(\xi)$ is \emph{$\xi$-microhyperbolic at energy level $\lambda$\/}, i.e. for each $\xi$ there exists a direction $\ell(\xi)$ such that $|\ell(\xi)|\le 1$ and 
\begin{equation}
(\langle \ell(\xi), \nabla_\xi\rangle  A^0(\xi)v,v) + |(A^0(\xi)-\lambda)v| \ge \epsilon_0 |v |^2 \qquad \forall v.
\label{eqn-1.4}
\end{equation}

Further, we assume that $V(x,\xi)$ is a real-valued function, satisfying
\begin{gather}
|D^\alpha _\xi D_x^\beta V(x,\xi)|\le  c_{\alpha\beta}(|\xi|+1)^m (|x|+1)^{-\delta-|\beta|} \qquad \forall \alpha, \beta\ \forall x, \xi
\label{eqn-1.5}\\
\shortintertext{and}
|D^\alpha _\xi D_x^\beta V(x,\xi)|\le  \varepsilon \qquad \forall \alpha, \beta\colon |\alpha|+|\beta|\le 1\ \forall x, \xi.
\label{eqn-1.6}
\end{gather}

Our main theorem is

\begin{theorem}\label{thm-1.1}
Let conditions \textup{(\ref{eqn-1.2})}-- \textup{(\ref{eqn-1.4})} and \textup{(\ref{eqn-1.6})} with sufficiently small constant $\varepsilon>0$ be fulfilled.  Then
 
\begin{enumerate}[label=(\roman*), wide, labelindent=0pt]
\item\label{thm-1.1-i}
The complete spectral asymptotics holds for $\tau \colon |\tau-\lambda|\le \epsilon$:
\begin{equation}
e_h(x,x,\tau) \sim \sum_{n\ge 0}\kappa_n (x,\tau)h^{-d+n}
\label{eqn-1.7} 
\end{equation}
where $e_h(x,y,\tau)$ is the Schwartz kernel of the spectral projector $\uptheta (\tau -A_h)$ of $A_h$.

\item\label{thm-1.1-ii} 
This asymptotics is \underline{infinitely differentiable} with respect to $\tau$.
\end{enumerate} 
\end{theorem}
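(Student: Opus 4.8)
The plan is to reduce the global problem to a microlocal one via a partition of unity in $x$, and then on each piece run the standard semiclassical Tauberian machinery, being careful to track differentiability in $\tau$. Concretely:

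Since $V(x,\xi)$ and all its derivatives are uniformly small by \textup{(\ref{eqn-1.6})}, the operator $A_h$ is a small perturbation of $A^0(hD)$, and the microhyperbolicity condition \textup{(\ref{eqn-1.4})} persists for $A^0(\xi)+V(x,\xi)$ at energy levels $\tau$ with $|\tau-\lambda|\le\epsilon$, uniformly in $x$. This is the key structural input. I would first freeze $x$ at a point $\bar x$, replace $V(x,\xi)$ by $V(\bar x,\xi)$ (a matrix symbol depending only on $\xi$), and observe that the frozen operator $A^0(hD)+V(\bar x,hD)$ has a smooth symbol to which the standard results apply — in particular it is microhyperbolic at level $\tau$, uniformly. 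The discrepancy $V(x,hD)-V(\bar x,hD)$ is, on the relevant scale, an operator whose symbol vanishes at $x=\bar x$, hence is small and contributes only to lower-order terms; this is controlled by the usual successive approximation / commutator estimates.

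The central step is then the \emph{local} complete asymptotics. On a microhyperbolicity cone one has, by the propagation of singularities under a microhyperbolic operator, that the Schwartz kernel of $u_h(x,y,t)=\bigl(e^{ih^{-1}tA_h}\bigr)(x,y)$ has no singularities for $0<|t|\le T$ (some fixed $T$) at $x=y$, modulo $O(h^\infty)$. Feeding this into the Fourier Tauberian theorem in its sharp form gives not just the leading term but, via the stationary-phase expansion of $\int \hat\chi_T(t)\,e^{-ih^{-1}t\tau}u_h(x,x,t)\,dt$, the full series $\sum_n \kappa_n(x,\tau)h^{-d+n}$, with coefficients $\kappa_n(x,\tau)$ expressed through the symbol of $A^0+V$ and finitely many of its $\xi$-derivatives at $x$. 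The coefficients are smooth in $\tau$ because the phase and amplitude in the stationary-phase expansion depend smoothly on $\tau$ and the microhyperbolicity (non-degeneracy) is uniform; differentiating \textup{(\ref{eqn-1.7})} in $\tau$ term by term is therefore legitimate, which is exactly statement \ref{thm-1.1-ii}. The differentiability is ultimately a consequence of the fact that $\partial_\tau e_h(x,x,\tau)$ is, up to $O(h^\infty)$, $\int \hat\chi_T(t)\,e^{-ih^{-1}t\tau}(-ih^{-1}t)u_h(x,x,t)\,dt$, to which the same stationary-phase analysis applies.

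The main obstacle I anticipate is \emph{uniformity in $x$}: since $V$ does not decay (only condition \textup{(\ref{eqn-1.6})} is assumed in Theorem \ref{thm-1.1}, not \textup{(\ref{eqn-1.5})}), one must make sure every constant in the microlocal analysis — the propagation time $T$, the microhyperbolicity constant $\epsilon_0$, the bounds on the remainders after $N$ steps of the expansion — is independent of the base point $\bar x$. This forces the freezing/perturbation argument to be carried out with symbol-class estimates that are uniform in $x$, using only \textup{(\ref{eqn-1.6})} for the smallness and \textup{(\ref{eqn-1.2})}--\textup{(\ref{eqn-1.3})} for the ellipticity/growth of $A^0$. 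A secondary technical point is justifying that the asymptotic series can be differentiated infinitely often in $\tau$ with remainders still $O(h^{-d+N})$ after $N$ terms: this requires propagating the Tauberian estimates to all $\tau$-derivatives simultaneously, which again reduces to the smoothness of the stationary-phase data and is routine once the uniform microhyperbolicity is in hand.
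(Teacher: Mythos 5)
Your proposal misses the central difficulty of the theorem, which is the behavior of $u_h(x,x,t)$ for \emph{large} times $T$. A Fourier Tauberian argument with a \emph{fixed} cutoff time $T$ yields only $e_h(x,x,\tau)=\sum_{n<N}\kappa_n h^{-d+n}+O(h^{1-d}/T)$: the remainder is $O(h^{1-d})$ for $T=\const$, not $O(h^\infty)$. To obtain the complete asymptotics one must push $T$ to $h^{-\infty}$, and to obtain $\tau$-derivatives one cannot use the Tauberian theorem at all (there is no Tauberian theorem for derivatives); instead one must use the Fourier inversion identity
\begin{equation*}
\partial_\tau^n e_h(x,x,\tau)=(2\pi h)^{-1}\int_{\bR} e^{-ih^{-1}\tau t}(-ih^{-1}t)^{n-1}u_h(x,x,t)\,dt,
\end{equation*}
whose integrand carries the non-integrable weight $(h^{-1}t)^{n-1}$. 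Controlling this requires proving that $|F_{t\to h^{-1}\tau}\chi_T(t)u_h(x,x,t)|$ is $O((h/T)^\infty)$ for \emph{all} $T\ge T_*$, uniformly --- precisely the content of Propositions~\ref{prop-2.2}--\ref{prop-2.5} and Corollary~\ref{cor-2.6}, which prove a local energy decay $\|\psi_R e^{ih^{-1}TA}\varphi_\varepsilon(A-\lambda)\psi_R\|\le C_s h^s R^s T^{-s}$ by an induction on propagation intervals and a conical $Q^\pm$ decomposition. Your sketch contains no mechanism for this: propagation of singularities on a fixed time interval (your step 2) only gives the standard Weyl remainder, and ``differentiating the asymptotic series term by term is legitimate'' is not an argument --- it is exactly the assertion that requires proof, and it is false for generic asymptotic series.

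A second issue is that your freezing argument cannot supply the missing long-time estimate. The frozen operator $A^0(hD)+V(\bar x,hD)$ has constant coefficients and its propagator trivially escapes, but the discrepancy $V(x,hD)-V(\bar x,hD)$, while small in symbol norm, acts over arbitrarily long times; ``contributes only to lower-order terms'' breaks down once $T$ is no longer $O(1)$. The mechanism the paper actually uses is the \emph{decay} in $x$ of $V$ (condition \textup{(\ref{eqn-1.5})}), which forces Hamiltonian trajectories on the energy shell to satisfy $|x(t)-x(0)|\ge\epsilon_2|t|$ and hence to escape the region where $V$ is felt. You explicitly note that \textup{(\ref{eqn-1.5})} is ``not assumed'' and plan to work uniformly in $x$; but the paper's own Section~\ref{sect-1} discussion (almost-periodic $V$ in dimension one, spectral gaps) shows the conclusion fails without decay, so this reading is untenable --- the omission of \textup{(\ref{eqn-1.5})} from the hypothesis list is simply a slip, and the decay assumption is indispensable.
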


\begin{remark}\label{rem-1.2}
\begin{enumerate}[label=(\roman*), wide, labelindent=0pt]
\item\label{rem-1.2-i}
Statement \ref{thm-1.1-i} was sketched under much more restrictive assumptions in Theorem~\ref{IDS-thm-3.2} of \cite{IvrIDS}; however we provide here more detailed exposition.

\item\label{rem-1.2-ii}
In Theorem~\ref{thm-2.8}  we provide the dependence of the remainder on $|x|$.

\item\label{rem-1.2-iii} 
This asymptotics is also infinitely differentiable with respect to $x$ but it is really easy. 
\end{enumerate}
\end{remark}

Differentiability and completeness of the spectral asymptotics are really different. F.e. for operators with almost periodic with respect to $x$ perturbation $V(x,hD)$ the spectral asymptotics are complete (see \cite{IvrIDS} and references there) but in dimension $1$ it is not necessarily differentiable even once due to spectral gaps. Furthermore, if we perturb an operator we study in this paper by an appropriate ``negligible'' operator (i. e. with $O(h^\infty )$ norm), the absolutely continuous spectrum on the segment $[\lambda_- ,\lambda_+]$ with $\lambda_\mp =\lambda + O(h^\infty)$ will be replaced by an eigenvalue of the infinite multiplicity and then the spectral asymptotics will complete albeit  non-differentiable even once.

To establish spectral asymptotics we apply the ``hyperbolic operator method''; namely, let us consider the Schwartz kernel of the propagator $e^{ih^{-1}tA_h}$:
\begin{equation}
u\coloneqq u_h(x,y,t)= \int e^{ih^{-1}t\tau }\,d_\tau e_h(x,y,\tau).
\label{eqn-1.8}
\end{equation}
Then under ellipticity and microhyperbolicity conditions (\ref{eqn-1.3}) and (\ref{eqn-1.4})
\begin{equation}
F_{t\to h^{-1} \tau} \bar{\chi}_T(t) u_h(x,x,t) \sim \sum _{n\ge 0}\kappa' _n(x,\tau) h^{1-d+n} ,
\label{eqn-1.9}
\end{equation}
where here and below  $\chi \in \sC_0^\infty ([-1,-\frac{1}{2}]\cup [\frac{1}{2},  1])$, 
$\bar{\chi}\in \sC_0^\infty ([-1,1])$, $\bar{\chi}(t)=1$ on $[-\frac{1}{2},\frac{1}{2}]$,
 $\chi_T(t)=\chi(t/T)$ etc,  $\kappa'_n(x,\tau)=\partial_\tau \kappa_n(x,\tau)$, and  $ T=T_*>0$ is a small constant here.

Then, due to Tauberian theorem we arrive to the spectral asymptotics with the remainder estimate $O(h^{1-d})$. Next, under different assumptions one, using propagation of singularities technique,  can prove that
\begin{equation}
|F_{t\to h^{-1} \tau} \chi _T(t) u_h(x,x,t)| = O(h^\infty)
\label{eqn-1.10}
\end{equation}
for all $T\in [T_*,T^*]$.  Then (\ref{eqn-1.9}) holds with $T=T^*$ and again, due to Tauberian theorem, we arrive to the spectral asymptotics with the remainder estimate $O(T^{*\,-1}h^{1-d})$ (provided $T^*=O(h^{-K})$). In particular, if (\ref{eqn-1.10}) holds provided  $T^*=h^{-\infty}$, we arrive to complete spectral asymptotics. This happens f.e. in the framework of \cite{IvrIDS}.

However we do not have Tauberian theorems for the derivatives (with respect to $\tau$) and we need to use an inverse Fourier transform and its derivatives
\begin{equation}
\partial ^n _\tau e_h(x,x,\tau)= 
(2\pi h)^{-1} \int_{\bR} e^{-ih^{-1} \tau t} (-ih^{-1}t)^{n-1}u_h(x,x,t)\,dt
\label{eqn-1.11}
\end{equation}  
for $n\ge 1$. If we insert a factor $\bar{\chi}_T(t)$ into integral, we will get exactly $n$-th derivative of the right-hand expression of (\ref{eqn-1.7}). However we need to estimate the remainder
\begin{equation}
(2\pi h)^{-1} \int_{\bR} e^{-ih^{-1} \tau t} (-ih^{-1}t)^{n-1} (1-\bar{\chi}_T(t)) u_h(x,x,t)\,dt
\label{eqn-1.12}
\end{equation} 
and to do this we need to properly estimate the left-hand expression of (\ref{eqn-1.10}) for all  $T\ge T_*$ (rather than for $T\in [T_*,T^*]$).

To achieve this we will use a more subtle propagation technique and prove that for $T\ge T_* (R)$ the left-hand expression of (\ref{eqn-1.10}) is $O( (h/T)^\infty)$, provided $|x|\le R$.

\chapter{Proofs}
\label{sect-2}

\section{Preliminary remarks}
\label{sect-2.1}

Observe that, due to assumptions (\ref{eqn-1.6}) and (\ref{eqn-1.5}) a propagation speed with respect to $\xi$ does not exceed $\min\bigl(\varepsilon,\, C (|x|+1)^{-1-\delta}\bigr)$ and  one can prove easily, that 
for  for a generalized Hamiltonian trajectory\footnote{\label{foot-1} For a definition of the generalized Hamiltonian trajectory see Definition \ref{monsterbook-def-2-2-8} of \cite{IvrMB}.} $(x(t),\xi(t))$ on energy level $\tau\le c$
\begin{equation}
\Sigma_\tau \coloneqq \{ (x,\xi)\colon \Ker (A(x,\xi)-\tau)\ne \{0\}\}
\label{eqn-2.1}
\end{equation}
with $A(x,\xi)=A^0(\xi) +V (x,\xi)$ we have $|\xi(t)-\xi(0)|\le \varepsilon'$ for all $t$ with $\varepsilon'=\varepsilon'(\varepsilon)\to 0$ as $\varepsilon\to 0$ and therefore

\begin{claim}\label{eqn-2.2}
Let conditions (\ref{eqn-1.2})--(\ref{eqn-1.4}), (\ref{eqn-1.5}) and (\ref{eqn-1.6}) with  $\varepsilon=\varepsilon (\varepsilon') >0$ with arbitrarily small $\varepsilon'$ be fulfilled. Then for a generalized Hamiltonian trajectory $(x(t),\xi(t))$ on $\Sigma_\tau$ 
\begin{equation}
|\xi(t)-\xi(0)|\le \varepsilon' \quad\text{and }\quad
|x(t)-x(0)|\ge \epsilon_2 |t| \qquad \forall t\in \bR.
\label{eqn-2.3}
\end{equation} 
\end{claim}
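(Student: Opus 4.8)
The plan is to control the $\xi$-component first, using the smallness of $\varepsilon$, and then to extract the lower bound $|x(t)-x(0)|\ge \epsilon_2|t|$ from the microhyperbolicity condition \textup{(\ref{eqn-1.4})}, which survives (in a slightly weakened form) the perturbation by $V$ because $|D^\alpha_\xi D^\beta_x V|\le \varepsilon$ for $|\alpha|+|\beta|\le 1$.

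First I would analyze the $\xi$-equation along a generalized Hamiltonian trajectory on $\Sigma_\tau$. By \textup{(\ref{eqn-1.5})} and \textup{(\ref{eqn-1.6})} the $\xi$-component of the (generalized) Hamiltonian field is governed by $D_x V$, whose norm does not exceed $\min\bigl(\varepsilon,\,C(|x|+1)^{-1-\delta}\bigr)$. Crucially, the propagation speed in $x$ is $O(1)$ (it is controlled by $\nabla_\xi A^0$, which is bounded on the relevant bounded range of $\xi$), so along the trajectory $|x(t)|\ge \epsilon_2|t| - C$ once the $x$-lower bound is in force, and the integral $\int \min(\varepsilon, C(|x(s)|+1)^{-1-\delta})\,ds$ over all of $\bR$ is bounded by $C\varepsilon + C\int (|s|+1)^{-1-\delta}\,ds = O(\varepsilon)+O(1)$. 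This is not yet small enough by itself; the point, as indicated in the text preceding the claim, is that one runs a bootstrap: the total variation of $\xi$ is a priori $\le \varepsilon'_0$ for some fixed small $\varepsilon'_0$ (just from $|D_xV|\le\varepsilon$ and finite-in-time estimates combined with the decay of $V$ restricting escape), and choosing $\varepsilon=\varepsilon(\varepsilon')$ small makes $\varepsilon'_0$ as small as desired. I would make this precise by a Gronwall-type argument on the interval where $|\xi(t)-\xi(0)|\le\varepsilon'_0$ and showing the bound never saturates.

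Next, with $|\xi(t)-\xi(0)|\le\varepsilon'$ established, I would prove the linear-in-$t$ lower bound on $|x(t)-x(0)|$. Consider the function $\phi(t)=\langle \ell(\xi(0)), x(t)-x(0)\rangle$ where $\ell$ is the microhyperbolicity direction from \textup{(\ref{eqn-1.4})} at the (nearly constant) point $\xi(0)$. For points of $\Sigma_\tau$, any $v\in\Ker(A(x,\xi)-\tau)$ satisfies $(A^0(\xi)-\lambda)v = (\lambda-\tau)v - V(x,\xi)v$, which has norm $O(|\lambda-\tau| + \varepsilon)|v|$, so \textup{(\ref{eqn-1.4})} forces $(\langle\ell(\xi),\nabla_\xi\rangle A^0(\xi)v,v)\ge (\epsilon_0 - O(|\lambda-\tau|+\varepsilon))|v|^2 \ge \tfrac12\epsilon_0|v|^2$ for $|\tau-\lambda|$ and $\varepsilon$ small. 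Hence $\dot\phi(t) = \langle\ell(\xi(0)),\nabla_\xi A(x(t),\xi(t))\rangle$ evaluated on the eigendirection is bounded below by $\tfrac12\epsilon_0 - |\langle\ell,\nabla_\xi V\rangle| - |\langle\ell, \nabla_\xi(A^0(\xi(t)) - A^0(\xi(0)))\rangle| \ge \epsilon_2 > 0$, using $|\nabla_\xi V|\le\varepsilon$ and the continuity of $\nabla_\xi A^0$ together with $|\xi(t)-\xi(0)|\le\varepsilon'$ small. For a generalized trajectory (where the velocity is a measurable selection from the convex hull of eigendirection velocities) the same lower bound on $\dot\phi$ holds a.e., so $\phi$ is monotone with $|\phi(t)|\ge\epsilon_2|t|$, whence $|x(t)-x(0)|\ge|\phi(t)|\ge\epsilon_2|t|$.

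The main obstacle is the interplay in the first step: the $\xi$-variation bound and the $x$-lower bound are mutually dependent (the decay of $V$ in $x$ is only useful once one knows the trajectory escapes linearly, but the linear escape relies on $\xi$ staying near $\xi(0)$). I would resolve this by a continuity/bootstrap argument on a maximal interval $[0,T_0)$ where $|\xi(t)-\xi(0)|\le\varepsilon'$: on that interval the $x$-estimate of the previous paragraph holds, giving $|x(t)|\ge\epsilon_2|t|-C$, which then bounds the $\xi$-variation strictly below $\varepsilon'$, contradicting maximality unless $T_0=\infty$; the same runs for $t<0$. A secondary technical point is justifying all differential inequalities for \emph{generalized} Hamiltonian trajectories in the sense of Definition~\ref{monsterbook-def-2-2-8} of \cite{IvrMB} rather than for classical ones, but since the defining velocity set is the convex hull of the microhyperbolic eigendirection velocities, every bound that is uniform over eigendirections passes to the convex hull, so this is routine.
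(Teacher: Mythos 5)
Your overall strategy --- control the $\xi$-variation using the smallness of $\varepsilon$ together with the $x$-decay of $V$, then invoke microhyperbolicity (which survives the perturbation because $|V|\le\varepsilon$ and $|\nabla_\xi V|\le\varepsilon$) to obtain the linear escape rate for $\langle\ell(\xi(0)),x(t)-x(0)\rangle$, with the two estimates coupled through a continuity/bootstrap argument --- is exactly what the paper's terse ``one can prove easily'' is gesturing at. Your second step is sound: the monotonicity of $\phi(t)=\langle\ell(\xi(0)),x(t)-x(0)\rangle$, the absorption of $|\tau-\lambda|$, $\|V\|$, $\|\nabla_\xi V\|$ and the $\ell(\xi(t))$ versus $\ell(\xi(0))$ discrepancy into the microhyperbolicity margin, and the observation that uniform bounds over eigendirections pass to the convex hull for generalized trajectories are all correct.

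The genuine gap is that your bootstrap, as set up, does not close. You compute that splitting $\int\min\bigl(\varepsilon,\,C(|x(s)|+1)^{-1-\delta}\bigr)\,ds$ at a \emph{fixed} radius gives $O(\varepsilon)+O(1)$, and you correctly flag this as ``not yet small enough''; but your final paragraph then simply asserts, on the maximal interval where $|\xi(t)-\xi(0)|\le\varepsilon'$, that the estimate ``bounds the $\xi$-variation strictly below $\varepsilon'$''. Your own $O(\varepsilon)+O(1)$ bound manifestly does not yield this once $\varepsilon'$ is small, so the argument is circular at this step. The missing computation is to split the integral at the $\varepsilon$-dependent crossover time $t_0\asymp\varepsilon^{-1/(1+\delta)}$ where the two bounds on $|\dot\xi|$ coincide (using $|x(t)|\gtrsim\epsilon_2|t|$ up to $t_0$ inside the bootstrap window). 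Then the near contribution is $\asymp\varepsilon\,t_0\asymp\varepsilon^{\delta/(1+\delta)}$, and the tail is $\asymp\int_{t_0}^\infty(\epsilon_2 t)^{-1-\delta}\,dt\asymp(\epsilon_2 t_0)^{-\delta}\asymp\varepsilon^{\delta/(1+\delta)}$; both vanish as $\varepsilon\to0$, so the total $\xi$-variation is $O(\varepsilon^{\delta/(1+\delta)})<\varepsilon'$ for $\varepsilon=\varepsilon(\varepsilon')$ small enough. This is the quantitative estimate that actually closes the continuity argument; without it the key assertion $|\xi(t)-\xi(0)|\le\varepsilon'$ for all $t$ is unsubstantiated.
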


\vskip-10pt
Then we conclude immediately that inequality
\begin{equation}
|F_{t\to h^{-1}\tau} \chi_T(t) u_h(x,x,t) |\le C'_{s}(T) h^s 
\label{eqn-2.4}
\end{equation} 
holds for arbitrarily constant $T>0$. 

Combining with (\ref{eqn-1.9}) for small constant $T$ we conclude that 

\begin{claim}\label{eqn-2.5}
Let conditions (\ref{eqn-1.2})--(\ref{eqn-1.4}), (\ref{eqn-1.5}) and (\ref{eqn-1.6}) with  sufficiently small constant $\varepsilon >0$  be fulfilled. Then asymptotic decomposition (\ref{eqn-1.9}) holds with an arbitrarily large constant $T$.
\end{claim}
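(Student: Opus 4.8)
The plan is to bootstrap the small-$T$ asymptotics~\eqref{eqn-1.9} up to an arbitrary constant $T$, using a telescoping decomposition of the cut-off together with the non-stationarity estimate~\eqref{eqn-2.4}. Recall that for $T=T_*$ small, \eqref{eqn-1.9} is the classical local semiclassical expansion furnished by the hyperbolic operator method: ellipticity~\eqref{eqn-1.3} confines the relevant range of $\xi$, microhyperbolicity~\eqref{eqn-1.4} kills the contribution of nonzero times on that range, and a stationary-phase expansion of the resulting oscillatory integral produces the coefficients $\kappa'_n(x,\tau)=\partial_\tau\kappa_n(x,\tau)$. These are determined purely locally, in particular independently of $T$, and that is exactly what makes the bootstrap possible.

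First I would split, for a fixed constant $T\ge T_*$,
\[
\bar\chi_T(t)=\bar\chi_{T_*}(t)+\bigl(\bar\chi_T(t)-\bar\chi_{T_*}(t)\bigr),
\]
where $\bar\chi_T-\bar\chi_{T_*}$ is supported in the compact set $\{T_*/2\le|t|\le T\}$, bounded away from $t=0$; partitioning this set on the scale $T_*$ I write the difference as a finite sum $\sum_j\psi_j$ with $\psi_j\in\sC_0^\infty$ supported in $\{\tfrac12 T_j\le|t|\le T_j\}$ and each $T_j$ a positive constant. Applying~\eqref{eqn-2.4} to every piece gives $|F_{t\to h^{-1}\tau}(\bar\chi_T-\bar\chi_{T_*})(t)\,u_h(x,x,t)|\le C_s(T)h^s$ for all $s$, hence
\[
F_{t\to h^{-1}\tau}\bar\chi_T(t)u_h(x,x,t)=F_{t\to h^{-1}\tau}\bar\chi_{T_*}(t)u_h(x,x,t)+O(h^\infty).
\]
The $O(h^\infty)$ error is harmlessly absorbed into the asymptotic series, so~\eqref{eqn-1.9} holds with the same coefficients $\kappa'_n$ and with the prescribed constant $T$.

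Thus the load-bearing input is~\eqref{eqn-2.4}, whose substance is Claim~\eqref{eqn-2.2}: on $\Sigma_\tau$ with $|\tau-\lambda|\le\epsilon$ a generalized Hamiltonian trajectory escapes linearly, $|x(t)-x(0)|\ge\epsilon_2|t|$. I would obtain this from the smallness of $\varepsilon$ in~\eqref{eqn-1.6} — the $\xi$-drift along the flow is $o(1)$, so the trajectory stays in a $\xi$-neighborhood of its initial point where~\eqref{eqn-1.4} persists, and microhyperbolicity then forces a definite speed in $x$ — combined with propagation of singularities for microhyperbolic (in general matrix, variable-multiplicity) operators from~\cite{IvrMB}, which transports $\WF(u_h)$ along these generalized trajectories. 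Hence for $|t|$ bounded below by a positive constant the wavefront set of $u_h(\cdot,\cdot,t)$ misses the diagonal, and non-stationary phase (integration by parts in $t$) in the Fourier integral defining the left-hand side of~\eqref{eqn-2.4} delivers the $O(h^s)$. The only genuine obstacle I anticipate is this propagation statement for a non-scalar symbol whose characteristic variety may have variable multiplicity, i.e.\ correctly handling the branching of generalized Hamiltonian trajectories; but under~\eqref{eqn-1.4} and~\eqref{eqn-1.6} it reduces to the standard microhyperbolic estimate and is routine at this stage — the delicate quantitative refinement, with a gain $O((h/T)^\infty)$ uniform for $|x|\le R$, is reserved for the sections that follow.
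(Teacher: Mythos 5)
Your argument is correct and follows the same route as the paper: the paper obtains (\ref{eqn-2.4}) for arbitrary constant $T$ from the linear escape in Claim~(\ref{eqn-2.2}) via non-stationary phase, and then "combines with (\ref{eqn-1.9}) for small constant $T$" — which is precisely your telescoping decomposition $\bar\chi_T=\bar\chi_{T_*}+(\bar\chi_T-\bar\chi_{T_*})$ with a finite partition of the difference into pieces covered by (\ref{eqn-2.4}). You merely spell out the absorption of the $O(h^\infty)$ error and the $T$-independence of the coefficients $\kappa'_n$, which the paper leaves implicit.
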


\section{Propagation and local energy decay}
\label{sect-2.2}

First we have the finite speed with respect to $x$ propagation:

\begin{proposition}\label{prop-2.1}
For $\tau\le c$ the following estimate holds
\begin{multline}
|F_{t\to h^{-1}\tau} \Bigl( \chi_T(t) u(x,y,t) \Bigr)| \le C'_{s} h^s R^{-s}\\
 \forall x,y\colon |x-y| \ge C_0 T, |x|+|y|\asymp  R.  
 \label{eqn-2.6}
\end{multline}
\end{proposition}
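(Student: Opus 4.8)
The plan is to establish this as a finite-propagation-speed statement with respect to the $x$-variable, exploiting that the propagation speed in $x$ for $A_h$ is bounded. Since $A^0(hD)$ has symbol $A^0(\xi)$ with $|\nabla_\xi A^0(\xi)|\le C(|\xi|+1)^{m-1}$ on the relevant energy shell $\{A^0(\xi)\le c + \varepsilon'\}$, which is compact, the $x$-propagation speed is bounded by some constant $C_0$; and $V(x,hD)$ contributes nothing to the $x$-propagation speed since it is a multiplication-type operator in $x$ (more precisely, $|\nabla_\xi V|\le \varepsilon$ by \eqref{eqn-1.6}, so it only contributes $\varepsilon$, which we absorb into $C_0$). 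First I would cut off energies: insert $\psi(A_h)$ with $\psi\in\sC_0^\infty$ equal to $1$ near $\tau$ and supported in $\{|\tau'-\lambda|\le 2\epsilon\}$; since $\chi_T(t)u(x,y,t)$ paired against $e^{ih^{-1}t\tau}$ is (up to $O(h^\infty)$) unchanged by this insertion — because $F_{t\to h^{-1}\tau'}\chi_T(t)u$ is $O(h^\infty)$ for $\tau'$ outside a neighborhood of $\tau$, by the standard argument that $\chi_T$ is supported away from $t=0$ and $u$ solves the Schr\"odinger-type equation — we may assume we are working with $\psi(A_h)e^{ih^{-1}tA_h}$, whose Schwartz kernel is microlocally supported where $\xi$ lies in a fixed compact set.

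Next I would invoke the standard finite-speed-of-propagation estimate for the hyperbolic operator $hD_t - A_h$: for $|t|\le T$ the singular support (indeed the $h$-wavefront set) of $u(\cdot,y,t)$ propagates a distance at most $C_0|t|\le C_0 T$ from $y$. Concretely, one uses an energy-type estimate. Let $\phi(x)$ be a smooth function with $\phi=0$ on $\{|x-y|\le C_0 T\}$ and, say, $|\nabla\phi|$ small; then one shows $\|e^{N\phi/h}\psi(A_h)\chi_T(t)u(\cdot,y,t)\|=O(h^\infty)$ by commuting the exponential weight through $hD_t - A_h$ and using that $[A_h, e^{N\phi/h}]e^{-N\phi/h}$ has operator norm $\le C_0 N\|\nabla\phi\|_\infty + o(1)$ on the energy-localized space, so choosing $\|\nabla\phi\|_\infty$ appropriately small kills the growth. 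This is the classical argument (cf.\ the treatment in \cite{IvrMB}); applying it with $x$ in the region $|x-y|\ge C_0 T$ gives that $F_{t\to h^{-1}\tau}\bigl(\chi_T(t)u(x,y,t)\bigr)=O(h^\infty)$ there, locally uniformly.

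To get the decay factor $R^{-s}$ on the right-hand side — i.e., the claim is not merely $O(h^\infty)$ but $O(h^s R^{-s})$ when $|x|+|y|\asymp R$ — I would track constants in the above weighted estimate, or more simply note that for $|x-y|\ge C_0 T$ with $|x|+|y|\asymp R$, one can take $T$ effectively of size $\asymp R$ in the propagation bound: the distance the singularity must travel is $\gtrsim R$ but time is only $\le T$, whereas actually the relevant gain comes from iterating the weighted estimate so that each application of the commutator produces a factor $h/(\text{distance})\lesssim h/R$. Choosing the weight $\phi$ to grow linearly up to $\asymp R$ and running the commutator argument $s$ times yields the factor $(h/R)^s$, hence $C'_s h^s R^{-s}$. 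The main obstacle — though it is routine in this subject — is the bookkeeping in the weighted energy estimate: one must verify that the energy localization by $\psi(A_h)$ is compatible with the exponential weight (i.e., $e^{N\phi/h}\psi(A_h)e^{-N\phi/h}$ is still a bounded pseudodifferential operator with controlled symbol, using analytic/Gevrey-type almost-analytic extensions of $\psi$ or simply the Helffer–Sj\"ostrand formula), and that the $o(1)$ errors from $V$ (controlled by \eqref{eqn-1.6}) genuinely stay small after conjugation; once that is in place, the finite-speed conclusion and the $R^{-s}$ gain follow by the standard iteration.
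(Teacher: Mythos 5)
Your proposal is correct in substance, but it takes a noticeably more hands-on route than the paper. The paper's proof is a one-line scaling argument: in the zone $\{|x|\asymp R\}$, rescale $x\mapsto x/R$, $t\mapsto t/R$, $h\mapsto \hbar=h/R$ (note that (\ref{eqn-1.5}) guarantees the rescaled $V(Rx,\xi)$ has bounded derivatives on scale $1$), and then apply the standard finite-speed-of-propagation theory of Chapter 2 of \cite{IvrMB} with semiclassical parameter $\hbar$; the conclusion $O(\hbar^s)=O((h/R)^s)$ is immediate. You instead run the weighted/commutator energy estimate directly at the original scale, which works and is more self-contained, but the paper's scaling is cleaner and packages all the ``bookkeeping'' you flag as the main obstacle — the compatibility of $e^{N\phi/h}$ with energy localization, the control of commutator terms, the iteration — into a single citation of known estimates at scale $\hbar$.

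One spot where you should be more careful: the exponential weight $e^{N\phi/h}$ with $\phi$ bounded away from zero and $N$ fixed gives $O(h^\infty)$ (indeed exponential) decay only for analytic or Gevrey symbols; for the $C^\infty$ symbols allowed here one must either take the weight of logarithmic size $\sim Kh\log(1/h)$, or (better, and what you gesture at in your last paragraph) replace the single exponential weight by an iterated-commutator argument with cutoffs varying on scale $R$, each commutator producing a factor $\asymp h/R$ via $|\nabla_\xi A^0|\lesssim 1$ on the compact energy shell together with (\ref{eqn-1.5})--(\ref{eqn-1.6}). It is the iterated-commutator form, not the literal exponential weight, that yields the stated $C'_s h^s R^{-s}$; in the paper this distinction is invisible because scaling converts the whole question into a standard $O(\hbar^s)$ statement.
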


\begin{proof}
In the zone $\{x\colon |x|\asymp R\}$ we can apply scaling $x\mapsto xR^{-1}$, $t\mapsto t R^{-1}$, $h\mapsto h R^{-1}$ and apply the standard theory of Chapter~\ref{monsterbook-sect-2} of \cite{IvrMB}. The rest is trivial.
\end{proof}

Next, we consider $R\le \epsilon_1 T$ and apply energy estimate method to prove the local energy decay.
Observe that one can select smooth $\ell(\xi)$ in condition (\ref{eqn-1.4}). Consider operator $L^0(x,hF)$ with Weyl symbol $-\langle x,\ell(\xi)\rangle$ and $L(x,hD; t)=L^0+\varepsilon t$.

Then
\begin{multline}
2h^{-1}\Re i( (hD_t -A)v, Lv)_{\Omega_T}=  \\
\begin{aligned}
&(Lv,v)\bigr|_{t=0}^{t=T}-\Re i h^{-1}([hD_t-A, L] v, v)_{\Omega_T} = \\
&(Lv,v)\bigr|_{t=0}^{t=T} -\varepsilon \|v\|_{\Omega_T}^2 +\Re (ih^{-1}[A,L]v,v)_{\Omega_T},
\end{aligned}
\label{eqn-2.7}
\end{multline}
where $\|.\|_\Omega$ and $(.,.)_\Omega$ are a norm and an inner product in $\sL^2(\Omega)$ with
$\Omega=\Omega_T=\bR^d \times [0,T]\ni (x,t)$. Indeed, writing the left-hand expression as
\begin{align*}
&ih^{-1}\bigl[( (hD_t -A)v, Lv)_{\Omega_T} -  ( L, (hD_t -A)v)_{\Omega_T}\bigr]=\\
&ih^{-1}\bigl[( L(hD_t -A)v, v)_{\Omega_T} -  ( (hD_t -A) L, v)_{\Omega_T}\bigr] + (Lv,v)\bigr|_{t=0}^{t=T}
\end{align*}
because $L^*=L$, we arrive to (\ref{eqn-2.7}).

In virtue of (\ref{eqn-1.5}) and (\ref{eqn-1.6}) for sufficiently small constant $\varepsilon$ and for 
$h\le h_0(\varepsilon_1)$  the operator norm of $h^{-1}[V,L]$ from $\sH^m (\bR^d)$ to $\sL^2(\bR^d)$ does not exceed $\varepsilon_1$ with 
$\varepsilon_1=\varepsilon_1(\varepsilon)\to 0$  as $\varepsilon\to 0$, and then due to the microhyperbolicity assumption we conclude that
\begin{equation}
\Re (ih^{-1}[A,L]v,v) \ge (\epsilon_0 -2\varepsilon_1)\|v\|^2 - C\|(A-\tau)v\|^2
\label{eqn-2.8}
\end{equation}
for both $\bR^d$ and $\Omega_T$. 

Let us plug  into (\ref{eqn-2.7})  $v= \varphi_{\varepsilon} (A -\tau) e^{ih tA}w$ where $\varphi\in \sC_0^\infty ([-1,1])$, $0\le \varphi \le 1$; then for sufficiently small constant $\varepsilon>0$ we arrive to
\begin{equation}
\epsilon \|v\|_{\Omega_T}^2 + (Lv,v)\bigr|_{t=T}\le (Lv,v)\bigr|_{t=0}.
\label{eqn-2.9}
\end{equation}

On the other hand, 
\begin{gather}
\Re (Lv,v)|_{t=T} \ge \varepsilon T \|v\|^2 - C\||x|^{\frac{1}{2}}v\|^2
\label{eqn-2.10}\\
\shortintertext{with}
\||x|^{\frac{1}{2}}v\|^2= \||x|^{\frac{1}{2}}v\|_{B(0,R)}^2 + \||x|^{\frac{1}{2}}v\|_{B(0,R') \setminus B(0,R)}^2 +
\||x|^{\frac{1}{2}}v\|_{\bR^d\setminus B(0,R')}^2
\notag
\end{gather}
with $R'=C_0T$ and therefore for $R\le \varepsilon T $ 
\begin{equation}
\Re (Lv,v)|_{t=T} \ge \varepsilon T \|v\|^2 - CR' \|v\|^2 _{B(0,R')\setminus B(0,R)}  - 
\||x|^{\frac{1}{2}}v\|^2 _{\bR^d\setminus B(0,R')}.
\label{eqn-2.11}
\end{equation}
Observe that
\begin{equation*}
|(Lv,v)|_{t=0}\le C\Bigl(\|v_0\|^2 + \| |x|^{\frac{1}{2}}v_0\|^2\Bigr)
\end{equation*}
and then (\ref{eqn-2.9}) and (\ref{eqn-2.10}) imply that if $R\le \varepsilon T$ then 
\begin{equation}
\| v\|_{B(0,r)}^2 \le \sigma \|v_0\|^2 + C T^{-1}\Bigl(  \||x|^{\frac{1}{2}}v\|_{\bR^d \setminus B(0,R')}^2 + 
\|v_0\|^2 + \| |x|^{\frac{1}{2}}v_0\|^2\Bigr) 
\label{eqn-2.12}
\end{equation}
with $\sigma <1$ and $v|_0=v|_{t=0}$; recall that $\|v\|=\|v_0\|$. 

Recall that $v= e^{ih^{-1}tA}\varphi_\varepsilon (A-\tau) \psi_R(x) w$ where we plugged $\psi_R w$ instead of $w$,
$\psi\in \sC_0^\infty (B(0,1))$, $0\le \psi \le 1$  and $\psi=1$ in $B(0,\frac{1}{2})$. 

One can prove easily that $Q= \varphi_{\varepsilon} (A -\tau)$ is an operator with Weyl symbol $Q(x,\xi)$, satisfying 
\begin{equation*}
|D^\alpha_\xi D^\beta _x Q|\le C_{\alpha\beta}\varepsilon^{-|\alpha|-|\beta|} (|x|+1)^{-|\beta|}.
\end{equation*}
Then 
$\|v_0\|_{\bR^d\setminus B(0,2R)}\le C(h/R)^s\|w\|$ and therefore
$\| |x|^{\frac{1}{2}}v_0\|^2 \le 2R \|w\|^2$.  Further, then Proposition~\ref{prop-2.1} implies that  $\||x|^{\frac{1}{2}}v\|_{\bR^d \setminus B(0,R')}^2\le C(h/T)^s \|w\|$ provided or $R'\ge C_0T$
with sufficiently large $C_0$ and we arrive to

\begin{proposition}\label{prop-2.2}
In the framework of Theorem~\ref{thm-1.1}
\begin{equation}
\| \psi_R  e^{ih^{-1}TA} \varphi _\varepsilon(A-\lambda) \psi_R \|  <1
\label{eqn-2.13}
\end{equation}
provided $\varepsilon T \ge R\ge 1$. 
\end{proposition}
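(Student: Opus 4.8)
The plan is to insert the function $v=e^{ih^{-1}tA}\varphi_\varepsilon(A-\lambda)\psi_R w$ into the energy inequality \textup{(\ref{eqn-2.12})} established above and to verify that, for $\varepsilon$ and $h$ small, the three error terms on its right‑hand side are each $\le C\varepsilon\|w\|^2+O(h^\infty)$, so that the coefficient $\sigma<1$ controls the whole right‑hand side. First I would reduce \textup{(\ref{eqn-2.13})} to a bound on $\|\psi_R v(T)\|$: it suffices to treat $w$ with $\|w\|=1$, and then $\psi_R v(T)=\psi_R e^{ih^{-1}TA}\varphi_\varepsilon(A-\lambda)\psi_R w$ is exactly the image of $w$ under the operator in \textup{(\ref{eqn-2.13})}. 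Since $0\le\psi\le1$ and $\supp\psi_R\subset B(0,R)$, one has $\|\psi_R v(T)\|^2\le\|v(T)\|_{B(0,R)}^2$, and this is the left‑hand side of \textup{(\ref{eqn-2.12})} taken with radius $R$ (admissible because $R\le R'=C_0T$); also $v_0\coloneqq v|_{t=0}=\varphi_\varepsilon(A-\lambda)\psi_R w$ has $\|v_0\|\le1$ and $\|v\|=\|v_0\|$ by unitarity of the propagator.

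Next I would dispose of the three error terms. Since $Q=\varphi_\varepsilon(A-\lambda)$ is a semiclassical pseudodifferential operator with the symbol bounds $|D^\alpha_\xi D^\beta_x Q|\le C_{\alpha\beta}\varepsilon^{-|\alpha|-|\beta|}(|x|+1)^{-|\beta|}$ recorded above, the data $v_0=Q\psi_R w$ is concentrated in $B(0,2R)$: $\|v_0\|_{\bR^d\setminus B(0,\rho)}\le C_s(h/\rho)^s$ for every $\rho\ge R$ and every $s$, whence (summing over dyadic shells) $\||x|^{\frac12}v_0\|^2\le CR\|w\|^2$. Consequently $T^{-1}\||x|^{\frac12}v_0\|^2\le C(R/T)\|w\|^2\le C\varepsilon\|w\|^2$ by the hypothesis $R\le\varepsilon T$, and likewise $T^{-1}\|v_0\|^2\le CT^{-1}\|w\|^2\le C\varepsilon\|w\|^2$ since $T\ge R/\varepsilon\ge1/\varepsilon$. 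For the last term, Proposition~\ref{prop-2.1} (finite propagation speed in $x$, the speed being bounded because $\varphi_\varepsilon(A-\lambda)$ together with \textup{(\ref{eqn-1.3})} confines $\xi$ to a bounded set) shows that $v(T)$ is concentrated in a ball of radius $2R+cT\le(c+2)T$; choosing $R'=C_0T$ with $C_0\ge c+2$ this ball lies inside $B(0,R')$, so $\||x|^{\frac12}v\|_{\bR^d\setminus B(0,R')}^2=O(h^\infty)$ and hence $T^{-1}\||x|^{\frac12}v\|_{\bR^d\setminus B(0,R')}^2=O(h^\infty)$.

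Substituting into \textup{(\ref{eqn-2.12})} yields $\|\psi_R v(T)\|^2\le\sigma\|v_0\|^2+C\varepsilon\|w\|^2+O(h^\infty)\le(\sigma+C\varepsilon)\|w\|^2+O(h^\infty)$, with $\sigma<1$ a fixed constant and the bound uniform over the admissible range $\varepsilon T\ge R\ge1$ once $\varepsilon$ is fixed. Picking $\varepsilon$ so small that $\sigma+C\varepsilon<1$ and then $h\le h_0(\varepsilon)$, I get $\|\psi_R v(T)\|<1$, which is \textup{(\ref{eqn-2.13})}. I do not foresee a serious obstacle: the real content is the energy estimate \textup{(\ref{eqn-2.12})}, which rests on the microhyperbolicity bound \textup{(\ref{eqn-2.8})} and the tailored weight $L=L^0+\varepsilon t$; the remaining work is bookkeeping of the tails of $v_0$ and $v(T)$, and the single point needing care is uniformity as $T\to\infty$, which holds because every error term carries a factor $T^{-1}$ or $h^\infty$.
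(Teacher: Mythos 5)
Your proposal is correct and follows essentially the same line as the paper: plug $v=e^{ih^{-1}tA}\varphi_\varepsilon(A-\lambda)\psi_R w$ into the energy estimate \textup{(\ref{eqn-2.12})}, use the pseudodifferential bound on $\varphi_\varepsilon(A-\lambda)$ to localize $v_0$ in $B(0,2R)$ so that $\||x|^{1/2}v_0\|^2\lesssim R\|w\|^2$, invoke Proposition~\ref{prop-2.1} to make the exterior term $O((h/T)^s)$, and then let $R\le\varepsilon T$ absorb the remaining terms into a coefficient strictly below~$1$. The extra bookkeeping you supply (reduction to $\|\psi_R v(T)\|$, dyadic summation, the explicit $\sigma+C\varepsilon<1$ step) is just a more detailed rendering of the paper's argument, not a different one.
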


While this statement looks weak, it will lead to much stronger one:

\begin{proposition}\label{prop-2.3}
In the framework of Theorem~\ref{thm-1.1}
\begin{equation}
\| \psi_R  e^{ih^{-1}TA} \varphi _\varepsilon(A-\lambda) \psi_R \|  \le C_s  R^s T^{-s}
\label{eqn-2.14}
\end{equation}
provided $T\ge C_0 R $, $R\ge 1$. 
\end{proposition}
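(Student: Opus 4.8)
The plan is to bootstrap Proposition~\ref{prop-2.2} (which gives only $\sigma<1$ for one step of duration $\asymp R/\varepsilon$) into the polynomial decay of Proposition~\ref{prop-2.3} by iterating the energy estimate over $\sim T/R$ consecutive time-intervals. First I would set $S \asymp R/\varepsilon$ (the minimal time for which Proposition~\ref{prop-2.2} applies at radius $R$) and write $T = NS$ with $N \asymp \varepsilon T/R$. The idea is to insert cutoffs $\psi_{R_j}$ at the intermediate times $t_j = jS$, with radii $R_j$ chosen so that $R_0 = R_N = R$ but the intermediate radii are allowed to grow — roughly $R_j \asymp \min(jS, (N-j)S)\cdot\varepsilon$, i.e. comparable to the maximal distance a generalized trajectory from $B(0,R)$ can travel and still return to $B(0,R)$ by time $T$ (by \textup{(\ref{eqn-2.3})}, $|x(t)-x(0)|\ge \epsilon_2|t|$, so a trajectory that is in $B(0,R)$ at both $t=0$ and $t=T$ never leaves $B(0,C(R+\varepsilon\min(t,T-t)))$).

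The key steps, in order: (1) telescoping identity — write $\psi_R e^{ih^{-1}TA}\varphi_\varepsilon(A-\lambda)\psi_R$ as a product of $N$ factors $\psi_{R_{j+1}} e^{ih^{-1}SA}\varphi_\varepsilon(A-\lambda)\psi_{R_j}$, up to an $O((h/T)^\infty)$ error coming from commuting $\varphi_\varepsilon(A-\lambda)$ through and from the fact that $(1-\psi_{R_{j}})e^{ih^{-1}SA}\varphi_\varepsilon(A-\lambda)\psi_{R_{j-1}}$ is negligible by Proposition~\ref{prop-2.1} (finite propagation speed: in time $S$ one moves at most $C_0 S \ll R_j - R_{j-1}$ when $R_j$ is in the growing regime, and in the shrinking regime $j\ge N/2$ one uses the geometry of returning trajectories). (2) Per-step bound — for the factors where $R_j \le \varepsilon S$ (equivalently $j$ near $0$ or near $N$, but actually for most $j$ since $R_j$ can be as large as $\asymp \varepsilon T$) apply Proposition~\ref{prop-2.2} to get norm $\le\sigma<1$; for the remaining factors bound the norm by $1$ trivially. (3) Count — one gets a product $\sigma^{M}$ where $M$ is the number of "good" steps; arranging the $R_j$ so that $M \asymp N \asymp \varepsilon T/R$ gives $\sigma^{cT/R}$, and since $\sigma^{cT/R} \le C_s (R/T)^s$ for every $s$, this is exactly \textup{(\ref{eqn-2.14})}.

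The main obstacle is step (1): making the telescoping rigorous requires that at each intermediate time the solution is genuinely microlocalized both in $x$ (inside $B(0,R_j)$) and on the energy shell $\Sigma_\lambda$, so that when I reinsert $\psi_{R_j}$ I lose only an $O(h^\infty)$-type error — and crucially that these errors, summed over $N \asymp h^{-K}$ steps, remain $O((h/T)^\infty)$. This forces the negligibility estimates in Proposition~\ref{prop-2.1} to be used with the gain $h^s R_j^{-s}$ (not just $h^s$), and one must check that $\sum_j$ of such terms is controlled; this is where the scaling $x\mapsto x R_j^{-1}$, $t\mapsto tR_j^{-1}$, $h\mapsto h R_j^{-1}$ from Proposition~\ref{prop-2.1} does the real work. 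A secondary subtlety is that the energy estimate \textup{(\ref{eqn-2.9})}–\textup{(\ref{eqn-2.12})} was derived for the fixed operator $L = L^0 + \varepsilon t$ on $[0,T]$; to iterate I would re-run it on each slab $[t_j, t_{j+1}]$ with the shifted operator $L^0 + \varepsilon(t - t_j)$, which changes nothing in the computation but must be stated. Once the telescoping is in place the counting in steps (2)–(3) is routine.
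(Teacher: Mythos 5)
Your overall strategy (iterate Proposition~\ref{prop-2.2} over $\sim T/R$ time slabs to convert $\sigma<1$ into a polynomial gain in $R/T$) is the right high-level picture, and it is roughly what the paper does. But the telescoping mechanism you propose in step~(1) does not work, and the gap is conceptual, not technical.

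The identity you want inserts $\psi_{R_j}+(1-\psi_{R_j})$ at each intermediate time $t_j$, and you need the $(1-\psi_{R_j})$-terms to be $O((h/T)^\infty)$. But the state $e^{ih^{-1}t_j A}\varphi_\varepsilon(A-\lambda)\psi_R w$ is \emph{not} localized in $B(0,R_j)$: by finite speed of propagation it lives in $B(0,R+C_0 t_j)$, and by (\ref{eqn-2.3}) the bulk of it has \emph{escaped} to $|x|\gtrsim \epsilon_2 t_j - R$. Inserting $\psi_{R_j}$ with $R_j\asymp\varepsilon t_j$ therefore discards an $O(1)$ portion of the state, not an $O(h^\infty)$ one. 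Your reading of (\ref{eqn-2.3}) is inverted: $|x(t)-x(0)|\ge\epsilon_2|t|$ is an escape estimate, not a confinement estimate, and for $T\gg R$ there are simply \emph{no} generalized trajectories from $B(0,R)$ back to $B(0,R)$ — the set you describe as lying in $B(0,C(R+\varepsilon\min(t,T-t)))$ is empty. (There is also a counting mismatch: with $S\asymp R/\varepsilon$ fixed, Proposition~\ref{prop-2.2} requires $\varepsilon S\ge R_j$, which fails for most $j$ once $R_j$ grows past $R$; your parenthetical ``but actually for most $j$'' is inconsistent with ``$j$ near $0$ or near $N$''.)

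The missing ingredient — and the actual content of the paper's proof — is a \emph{microlocal} incoming/outgoing/elliptic decomposition $Q^+ + Q^- + Q^0 = \phi_R$ at the boundary of $B(0,R/2)$ (see (\ref{eqn-2.17})--(\ref{eqn-2.23})). The paper never claims the intermediate state is confined; instead it writes $\psi_R e^{ih^{-1}(n+1)TA}\varphi_\varepsilon\psi_R w$ by splitting at time $nT$ into $\psi_{R/2}+(1-\psi_{R/2})$, handles the $\psi_{R/2}$ part by the energy estimate (gaining a factor $\nu<1$), and handles the $(1-\psi_{R/2})$ part by the geometry of propagation cones: the outgoing component $Q^+$ never re-enters $B(0,R)$ (so $\psi_R$ at time $(n+1)T$ kills it), the incoming component $Q^-$ must have come from far away (so $\psi_R$ at time $0$ kills it, via $e^{-ih^{-1}nTA}$ and Proposition~\ref{prop-2.4}), and $Q^0$ lies in the elliptic zone. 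This produces the inductive bound (\ref{eqn-2.15}), $\|\psi_R e^{inh^{-1}TA}\varphi_\varepsilon\psi_R\|\le C\nu^n + C_s nR^sT^{-s}$, which is then optimized by the reparametrization $n=(T/R)^{1/3}$, $t=T/n$, $r=nR$. Without the $Q^\pm$ decomposition there is no way to control the part of the state that has left the ball, and that is exactly where your telescoping breaks.
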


\begin{proof}
We want to prove by induction that
\begin{equation}
\| \psi_R  e^{in h^{-1}TA} \varphi _\varepsilon(A-\lambda) \psi_R \|  \le C\nu^N + C_s  n R^s T^{-s}
\label{eqn-2.15}
\end{equation}
with $\nu <1$. 

Assuming that for $n$ we have (\ref{eqn-2.15}), we apply the previous arguments on the interval $[nt, (n+1)T]$ to
$v =    e^{ih^{-1}tA}\varphi _\varepsilon(A-\lambda)\psi_{R/2} e^{ih^{-1}n TA}  \psi_R w$ and derive an estimate 
\begin{multline}
\| \psi_R e^{ih^{-1}tA}\varphi _\varepsilon(A-\lambda)\psi_{R/2} e^{ih^{-1}n TA}  \psi_R w \| \le \\
\nu  \| \varphi _\varepsilon(A-\lambda)\psi_{R} e^{ih^{-1}n TA}  \psi_R w\| + C'_K (h/R)^K\|w\|
\label{eqn-2.16}
\end{multline}
with $\nu <1$.

To make a step of induction we weed to estimate the norm of
\begin{multline}
\psi_R e^{ih^{-1}tA}\varphi _\varepsilon(A-\lambda)(1-\psi_{R/2}) e^{ih^{-1}n TA}  \psi_R w =\\
\begin{aligned}
&\psi_R e^{ih^{-1}tA}\varphi _\varepsilon(A-\lambda)(1-\psi_{R/2}) Q^+ e^{ih^{-1}n TA}  \psi_R+\\
&\psi_R e^{ih^{-1}tA}\varphi _\varepsilon(A-\lambda)(1-\psi_{R/2}) Q^-e^{ih^{-1}n TA}  \psi_R,
\end{aligned} 
\label{eqn-2.17}
\end{multline}
with $Q^\pm = Q^\pm (x,hD)$, $Q^+ + Q^-=I$ to be selected to ensure that  

\begin{claim}\label{eqn-2.18}
Generalized Hamiltonian trajectories on $\Sigma_\tau$, starting  as $t=0$ from  $\supp(Q^\pm) \cap \supp (1-\psi_{R/2})$ in the positive (negative) time direction, remain in the zone
$\{|x|\ge \epsilon_1 R+\epsilon_2 |t| \}$. 
\end{claim}

Then we show that
\begin{align}
&\|\psi_R e^{ih^{-1}tA}\varphi _\varepsilon(A-\lambda)(1-\psi_{R/2}) Q^+\|\le C_s(h/R)^s
\label{eqn-2.19}\\
\shortintertext{and}
& \|\varphi _\varepsilon(A-\lambda)(1-\psi_{R/2}) Q^-e^{ih^{-1}n TA}  \psi_R\| \le C_s(h/R)^s.
\label{eqn-2.20}
\end{align}

To achieve that consider $A^0(\xi)$ and for each $\xi$ in the narrow vicinity $\cW$ of $\Sigma_\tau$ let 
$K^+(\xi)\subset \bR^d_x\times \bar{\bR}^+$ be a forward propagation cone and $K^-(\xi)=-K^+(\xi)$ be a backward propagation cone. Let
\begin{equation}
\Omega ^\pm = \{(x,\xi) \colon x \notin  \uppi_x K^\pm (\xi) \}
\label{eqn-2.21}
\end{equation}
where $\uppi_x$ is $x$-projection.

Then $\Omega^\pm$ are open sets and since $\uppi_x K^+(\xi)\cap \uppi_x K^-(\xi)=\{0\}$ we conclude that
$\Omega^+ \cup \Omega ^-\supset \bS^{d-1}\times \cW$. We can then find smooth positively homogeneous of degree $0$ with respect to $x$ symbols $q^\pm (x,\xi)$ supported in $\Omega^\pm $ such that $q^+ + q^-=1$ on $\bS^{d-1}\times \cW$. Let $q^0 =1-(q^++q^-)$; then $(A^0-\tau)$ is elliptic on $\supp(q^0)$.

Finally, let $Q^\pm $ and $Q^0$ be operators with the symbols 
$q^\pm (x ,\xi) \phi _R(x) $ and  $q^0 (x |,\xi) \phi _R(x)$ correspondingly, where $\phi \in \sC_0^\infty (\bR^d\setminus 0)$, equal $1$ as $c^{-1}\le |x|\le c$ with large enough constant $c$. Then
\begin{equation}
Q^+ + Q^- +Q^0 =\phi _R(x),
\label{eqn-2.22}
\end{equation}
where (\ref{eqn-2.18}) holds and $(A-\tau)$ is elliptic on the support of the symbol of $Q^0$.

Then Proposition~\ref{prop-2.4} below implies that for $R\le \varepsilon T$ with sufficiently small constant $\varepsilon$ both (\ref{eqn-2.19}) and (\ref{eqn-2.20}) hold. On the other hand, ellipticity of $(A-\tau)$ on 
$\supp (Q^0)$ implies that 
\begin{equation}
\|\varphi _\varepsilon(A-\lambda)(1-\psi_{R/2}) Q^0 \| \le C_s(h/R)^s.
\label{eqn-2.23}
\end{equation}

Then we can make an induction step by $n$ and to prove (\ref{eqn-2.15}). After this, let us replace in (\ref{eqn-2.15}) $R$ and $T$ by $r$ and $t$. Next, for given $R,T$ such that $R\le \varepsilon^3T$ let us plug into (\ref{eqn-2.15}) $n= (T/R)^{\frac{1}{3}}$, 
$t = T^{\frac{2}{3}}R^{\frac{1}{3}}=T/n$ and $r= T^{\frac{1}{3}}R^{\frac{2}{3}}= n R$ (obviously $r\le \varepsilon t$). We arrive to (\ref{eqn-2.14}) with a different but still arbitrarily large exponent $s$.
\end{proof}

As mentioned, we need the following proposition:

\begin{proposition}\label{prop-2.4}
Let conditions of Theorem~\ref{thm-1.1} be fulfilled. Let $\bar{x} \in \bR^d\setminus 0$, $\bar{\xi} \in \cW$ and assume that $0\notin \bar{x}+ \pi_x K^\mp (\xi)$. Let $\cK^\mp$ be a conical $\eta$-vicinity of 
$K^\mp (\xi)$ and $\cV$ be $\eta R$-vicinity of $\bar{x}$, $R=|x|$. Then 
\begin{equation}
\| Q' e^{\pm i h^{-1}T A} Q\| \le C_s (h/R)^s 
\label{eqn-2.24}
\end{equation}
provided $Q=Q(x,hD)$ and $Q'=Q'(x,hD)$ are operators with the symbols  satisfying 
\begin{equation}
|D^\alpha _\xi D^\beta_x Q|\le c_{\alpha\beta} r^{-|\beta|} 
\label{eqn-2.25}
\end{equation}
with $r =T+R$ and $r=R$ respectively, $R\ge R(\eta)$ and support  of symbol of $Q$ does not intersect with 
$\cV  +  \cK^\mp  |_{t=T}$, symbol of $Q'$ is supported in the sufficiently small vicinity of $(\bar{x},\bar{\xi})$.
\end{proposition}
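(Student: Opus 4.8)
The plan is to reduce \textup{(\ref{eqn-2.24})} to a microlocal propagation statement on the unit scale and then invoke the machinery of Chapter~\ref{monsterbook-sect-2} of \cite{IvrMB}. It is enough to treat the ``$+$'' sign: the change of variable $t\mapsto -t$ replaces $A$ by $-A$, hence $A^0$ by $-A^0$ and $\lambda$ by $-\lambda$, and interchanges $K^+$ and $K^-$, turning the ``$-$'' assertion into the ``$+$'' one. So I put $v(t)=e^{ih^{-1}tA}Qw$, which solves $(hD_t-A)v=0$ with $v(0)=Qw$, and I must show $\|Q'v(T)\|\le C_s(h/R)^s\|w\|$.

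The geometric heart of the matter is that no generalized Hamiltonian trajectory $(x(t),\xi(t))$ of $A$ lying on $\Sigma_\tau$ and issued at $t=0$ from $\supp Q$ can reach $\supp Q'$ at $t=T$. Along such a trajectory $\dot x(t)\in \pi_xK^+(\xi(t))|_{t=1}+B(0,C\varepsilon)$, the $O(\varepsilon)$ error coming from $\nabla_\xi V$, which is controlled by \textup{(\ref{eqn-1.6})}; moreover $|\xi(t)-\xi(0)|\le\varepsilon'$ for all $t$ by Claim~(\ref{eqn-2.2}). Since the symbol of $Q'$ is supported in a small vicinity of $(\bar x,\bar\xi)$, a trajectory ending there keeps $\xi(t)$ within some $\varepsilon''=\varepsilon''(\varepsilon)\to 0$ of $\bar\xi$ for all $t\in[0,T]$, so — by continuity of the cone in $\xi$ — $\dot x(t)\in\pi_xK^+(\bar\xi)|_{t=1}+B(0,\varepsilon'')$ and hence $x(T)-x(0)\in\pi_xK^+(\bar\xi)|_{t=T}+B(0,\varepsilon''T)$. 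With $x(T)\in\cV$ this forces $x(0)\in\cV+\pi_xK^-(\bar\xi)|_{t=T}+B(0,\varepsilon''T)\subset\cV+\cK^\mp|_{t=T}$ once $\varepsilon''<\eta$ (so that the conical $\eta$-vicinity absorbs the $O(\varepsilon''T)$ drift) and $R\ge R(\eta)$ — contradicting the hypothesis on $\supp Q$. The hypothesis $0\notin\bar x+\pi_xK^\mp(\xi)$ is used precisely to keep this relevant portion of $\supp Q$ at distance $\gtrsim\eta(R+|t|)$ from the origin for all $t\in[0,T]$, which matters for the rescaling in the next step.

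Then I would rescale by $\tilde x=x/R$, $\tilde t=t/R$, $\hbar=h/R$, $\tilde T=T/R$: now $A$ becomes the operator $\tilde A = A^0(\hbar D_{\tilde x})+V(R\tilde x,\hbar D_{\tilde x})$, and on the zone $|\tilde x|\gtrsim\eta$ the rescaled perturbation together with all its $\tilde x$- and $\xi$-derivatives is $O(R^{-\delta})$ by \textup{(\ref{eqn-1.5})}, while by \textup{(\ref{eqn-2.25})} the symbols of $Q$ and $Q'$ become ordinary semiclassical symbols — of scale $\tilde T+1$ for $Q$ and of unit scale for $Q'$ — and the separation statement above is scale-invariant. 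On this scale the energy/weight propagation estimate of Chapter~\ref{monsterbook-sect-2} of \cite{IvrMB} — structurally the one already used for Proposition~\ref{prop-2.2} in \textup{(\ref{eqn-2.7})}--\textup{(\ref{eqn-2.12})}, but with a weight adapted to the cone $\cK^\mp$ — applies: microhyperbolicity \textup{(\ref{eqn-1.4})} makes $K^\pm$ genuine cones with bounded propagation speed, and the absence of a connecting trajectory yields $\|Q'e^{i\hbar^{-1}\tilde T\tilde A}Q\|=O(\hbar^\infty)=O((h/R)^\infty)$ with constants controlled by finitely many seminorms of the symbols. Undoing the rescaling is \textup{(\ref{eqn-2.24})}.

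The step I expect to be the real obstacle is this last estimate, specifically its uniformity in $T$: in the situation where the proposition is applied one has $R\le\varepsilon T$, so $\tilde T=T/R$ is at least $\varepsilon^{-1}$ and may be arbitrarily large, and a fixed-time propagation theorem does not suffice. This is exactly where the \emph{conical}, rather than tubular, shape of the exclusion set $\cV+\cK^\mp|_{\tilde t}$ is essential: the margin between $\supp Q$ and the set backward-reachable from $\supp Q'$ grows like $\eta\tilde t$, whereas by Claim~(\ref{eqn-2.2}) the perturbation bends trajectories off the unperturbed straight rays by only $O(\varepsilon''\tilde t)\ll\eta\tilde t$, so the margin is never used up. Concretely I would close the estimate by iterating it over a geometric sequence of time subintervals $[\tilde t_k,\tilde t_{k+1}]$ with $\tilde t_{k+1}\asymp 2\tilde t_k$, each step being of ``bounded'' type at scale $\tilde t_k$ — precisely as Proposition~\ref{prop-2.3} is obtained from Proposition~\ref{prop-2.2} — the losses $(\hbar/\tilde t_k)^s$ summing to $O(\hbar^\infty)$.
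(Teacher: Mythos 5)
Your sketch captures the geometric idea — reduce to the absence of a connecting generalized Hamiltonian trajectory and close with a propagation estimate — but the quantitative mechanism that makes the paper's proof work is missing, and the fallback you use in its place is the wrong one.

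The crux is \emph{why} $\xi(t)$ stays close enough to $\bar{\xi}$ that the whole trajectory remains inside the cone. You invoke Claim~(\ref{eqn-2.2}) to get $|\xi(t)-\xi(0)|\le\varepsilon'$ and then require $\varepsilon''<\eta$, with an incidental mention of $R\ge R(\eta)$. But $\varepsilon'$ depends only on the fixed constant $\varepsilon$ from (\ref{eqn-1.6}), not on $R$, so this route would make Proposition~\ref{prop-2.4} hold only when $\eta$ is not too small relative to the global $\varepsilon$ — a weaker and differently quantified statement than the one claimed, in which $\eta$ is arbitrary and $R\ge R(\eta)$ is the operative hypothesis. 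What the paper actually proves, and what you omit, is the dyadic bookkeeping over $|x|$-scales: on each annulus $\{|x|\asymp r\}$ one rescales by that $r$ (not by $R$ uniformly), uses the \emph{decay} hypothesis (\ref{eqn-1.5}) (not the smallness (\ref{eqn-1.6})) to bound the rescaled $\nabla_x V$ and hence the $\xi$-propagation speed by $Cr^{-1-\delta}$, observes that the trajectory spends time $\asymp r$ in that annulus because it moves outward at unit speed in the $\langle x,\ell(\bar\xi)\rangle$ direction, and so accumulates a $\xi$-variation $\lesssim r^{-\delta}$ there. Summing the geometric series over dyadic $r\ge R$ gives a total $\xi$-drift $\le CR^{-\delta}$, which is $\le\eta$ precisely because $R\ge R(\eta)$. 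This is the step that keeps the trajectory inside $\cK^\mp$ uniformly in $T$; it depends on the $r$-dependent rescaling, which your $R$-uniform rescaling flattens out.

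Two smaller inaccuracies: the rescaling you propose makes $\hbar = h/R$ fixed across the whole trajectory, whereas the paper's effective semiclassical parameter is $h/r$, improving as the trajectory moves outward; and your final appeal to iteration ``precisely as Proposition~\ref{prop-2.3} is obtained from Proposition~\ref{prop-2.2}'' has the dependency backwards — Proposition~\ref{prop-2.3} is deduced \emph{from} Proposition~\ref{prop-2.4}, and its iteration scheme (the $Q^{\pm},Q^0$ decomposition and the induction (\ref{eqn-2.15})) is not of the same type as what is needed here.
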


\begin{proof}
Considering propagation in the zone $\{x\colon |x|\asymp r\}$, we see that the propagation speed with respect to 
$\xi$ does not exceed $Cr^{-1-\delta}$. To prove this we scale  $x\mapsto xr^{-1}$, $t\mapsto tr ^{-1}$, $h\mapsto \hbar= hr^{-1}$ and apply the standard energy method (see Chapter~\ref{monsterbook-sect-2} of \cite{IvrMB}). We leave the easy details to the reader. 

Therefore for time $t\asymp r$ variation  of $\xi$ does not exceed $Cr^{-\delta}$.
 Then, the propagation speed  with respect to $\langle x,\ell(\bar{\xi})\rangle $ (which increases) is of magnitude $1$
(as long as $\xi$ remains in the small vicinity of $\bar{\xi}$). Again, to prove it we scale and apply the energy method (see Chapter~\ref{monsterbook-sect-2} of \cite{IvrMB}).

But then the contribution of the time interval $t \asymp r$ to the variation of $\xi$ does not exceed $C r^{-\delta}$ and therefore the variation of $\xi$ for a time interval $[0,T]$ with $T\ge 0$ does not exceed $C R^{-\delta}\le \eta$ for $R\ge R(\eta)$.
\end{proof}

\begin{proposition}\label{prop-2.5}
In the framework of Theorem~\ref{thm-1.1}
\begin{equation}
\| \psi_R  e^{ih^{-1}TA} \varphi _\varepsilon(A-\lambda) \psi_R \|  \le C_s  h^s,
\label{eqn-2.26}
\end{equation}
provided $T\ge C_0 R $, $R\ge 1$. 
\end{proposition}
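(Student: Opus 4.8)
The plan is to upgrade the polynomial‑in‑$(R/T)$ decay of Proposition~\ref{prop-2.3} to the $O(h^\infty)$ bound (\ref{eqn-2.26}). One range is free: if $T\ge h^{-1}R$ then, applying (\ref{eqn-2.14}) with the exponent taken equal to $s$ and using $R\ge 1$, we get $\|\,\cdot\,\|\le C_sR^sT^{-s}\le C_sh^s$ at once. So from now on I assume $C_0R\le T\le h^{-1}R$, the regime in which $R$ and $T$ differ by at most a factor $h^{-1}$.

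In this regime I would peel off the last subinterval of length $T_1\asymp C_0R$, writing
\[\psi_R\, e^{ih^{-1}TA}\,\varphi_\varepsilon(A-\lambda)\,\psi_R=\psi_R\, e^{ih^{-1}(T-T_1)A}\,\varphi_\varepsilon(A-\lambda)\, e^{ih^{-1}T_1A}\,\psi_R\]
and inserting $1=\psi_{C_2R}+(1-\psi_{C_2R})$ just after the middle factor $\varphi_\varepsilon(A-\lambda)e^{ih^{-1}T_1A}$, with $C_2$ so large that, by the finite speed of propagation with respect to $x$, $(1-\psi_{C_2R})\varphi_\varepsilon(A-\lambda)e^{ih^{-1}T_1A}\psi_R=O(h^\infty)$. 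What survives is $\psi_R\,e^{ih^{-1}(T-T_1)A}\,\bigl(\varphi_\varepsilon(A-\lambda)\psi_{C_2R}\bigr)\bigl(\varphi_\varepsilon(A-\lambda)e^{ih^{-1}T_1A}\psi_R\bigr)+O(h^\infty)$, in which $\varphi_\varepsilon(A-\lambda)\psi_{C_2R}$ is, modulo $O(h^\infty)$, a pseudodifferential operator microlocalized in $\{|x|\lesssim R\}\cap\cW$. On $\cW$ the propagation cones $K^\pm(\xi)$ are salient with a uniform aperture, because of the microhyperbolicity assumption (\ref{eqn-1.4}); taking a microlocal partition of unity of fixed step (so $O(R^d)$ terms, a polynomial loss absorbed into $s$), each box centred at $(\bar x,\bar\xi)$ satisfies $0\notin\bar x+\uppi_x K^\mp(\bar\xi)$ for at least one choice of sign, and since $T-T_1\gg R$ the reachable set $\cV+\cK^\mp|_{t=T-T_1}$ lies at distance $\gtrsim\epsilon_2(T-T_1)\gg R$ from that microsupport. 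Hence Proposition~\ref{prop-2.4} applies box by box and sign by sign and contributes only $C_s(h/R)^s\le C_sh^s$, so the whole surviving term is $O(h^s)$.

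What remains, and what I expect to be the main obstacle, is the base estimate $\|\psi_R e^{ih^{-1}T_1A}\varphi_\varepsilon(A-\lambda)\psi_R\|\le C_sh^s$ for $T_1\asymp C_0R$: here there is no earlier interval over which to push mass to infinity, and Proposition~\ref{prop-2.3} delivers only a constant since $R\asymp T_1$. To get it I would rescale $x\mapsto x/R$, $t\mapsto t/R$, $h\mapsto\hbar=h/R$, which turns the operator into $\psi_1\, e^{i\hbar^{-1}\tilde T_1\tilde A}\,\varphi_\varepsilon(\tilde A-\lambda)\,\psi_1$ with $\tilde T_1\asymp C_0$ bounded and $\tilde A=A^0(\hbar D)+\tilde V$ a perturbation of the constant operator whose symbol is $O(R^{-\delta})$ on $\{|x|\gtrsim 1\}$ and which stays bounded with bounded first derivatives everywhere by (\ref{eqn-1.6}); over the bounded time $\tilde T_1$ propagation of singularities is then exact modulo $O(\hbar^\infty)$, and the escape lower bound (\ref{eqn-2.3}) shows that for $C_0\ge C_0(\epsilon_2)$ every generalized trajectory issued from $\{|x|\le 1\}$ on $\Sigma_\lambda$ has already left $\{|x|\le 1\}$ by time $\tilde T_1$, whence the left‑hand side is $O(\hbar^\infty)=O((h/R)^\infty)\le O(h^\infty)$. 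The delicate point throughout is the uniformity of all constants as $R\to\infty$, in particular near the rescaled origin where $\tilde V$ ceases to be slowly varying; granting it, combining the two ranges of $T$ with the decomposition above yields (\ref{eqn-2.26}).
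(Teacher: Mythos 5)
The paper's own proof is a one-liner: it re-runs the multi-scale argument used to prove Proposition~\ref{prop-2.4}, in which the effective semiclassical parameter at scale $r$ is $h r^{-1}$, and observes that for a trajectory issued from $B(0,R)$ the minimal scale $r$ along the propagation cone is $1$ rather than $R$ (since the cone may sweep past the origin). This immediately gives $C_s h^s$ in place of $C_s(h/R)^s$, with no peeling, no auxiliary cutoff $\psi_{C_2R}$, and no global rescaling. Your proposal takes a genuinely different route, and I believe it has two gaps.

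First, the ``box by box and sign by sign'' application of Proposition~\ref{prop-2.4} is not available here. That freedom to choose the time direction separately on each partition element is the device used in the proof of Proposition~\ref{prop-2.7}, which concerns a \emph{trace}; for an operator-norm bound on $\psi_R\,e^{ih^{-1}(T-T_1)A}\,\psi_{C_2R}\varphi_\varepsilon(A-\lambda)$ the direction of the exponent is fixed, and (after taking the adjoint to make the box factor play the role of $Q'$) the relevant hypothesis of Proposition~\ref{prop-2.4} is specifically $0\notin\bar x+\uppi_x K^+(\bar\xi)$. Roughly half the boxes in your microlocal partition of $\psi_{C_2R}\varphi_\varepsilon$ will fail this condition: for them the propagation cone in the physically enforced time direction does pass near the origin, which is precisely the regime where the proof of Proposition~\ref{prop-2.4} breaks down and one needs the scale to drop to $r=1$ --- i.e.\ one needs Proposition~\ref{prop-2.5} itself. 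So the decomposition is to this extent circular.

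Second, the base case: you acknowledge the uniformity problem yourself, and it is real. After the global rescaling $x\mapsto x/R$, the potential $\tilde V(x,\xi)=V(Rx,\xi)$ has $|D^\beta_x\tilde V|\lesssim R^{|\beta|}(R|x|+1)^{-\delta-|\beta|}$, which blows up (in $R$) on $\{|x|\lesssim R^{-1}\}$; ``propagation of singularities is exact modulo $O(\hbar^\infty)$'' with uniform constants cannot be invoked there. The paper avoids this by never rescaling globally: the scaling $x\mapsto x/r$ is done locally in dyadic shells $\{|x|\asymp r\}$, and on the innermost shell $r\asymp 1$ one simply keeps the original semiclassical parameter $h$, which is exactly what produces the bound $C_s h^s$. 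That local multi-scale argument, already carried out in the proof of Proposition~\ref{prop-2.4}, is what Proposition~\ref{prop-2.5} is quietly reusing. Your observation that the range $T\ge h^{-1}R$ is immediate from Proposition~\ref{prop-2.3} is correct but does not touch the essential difficulty.
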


\begin{proof}
It follows immediately from Proposition~\ref{prop-2.4}  with the semiclassical parameter  $hr^{-1}$ and with $r$ set to its minimal value along the cone of propagation, which is $1$.
\end{proof}

Combining Propositions~\ref{prop-2.3} and~\ref{prop-2.5} we arrive to

\begin{corollary}\label{cor-2.6}
In the framework of Theorem~\ref{thm-1.1}
\begin{equation}
\| \psi_R  e^{ih^{-1}TA} \varphi _\varepsilon(A-\lambda) \psi_R \|  \le C_s  h^s R^s T^{-s}
\label{eqn-2.27}
\end{equation}
provided $T\ge C_0 R $, $R\ge 1$. 
\end{corollary}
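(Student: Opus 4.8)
The plan is to obtain \textup{(\ref{eqn-2.27})} by a one-line interpolation between the two bounds already established. Write
$\cN = \cN(R,T,h)\coloneqq \| \psi_R e^{ih^{-1}TA}\varphi_\varepsilon(A-\lambda)\psi_R\|$
for the quantity to be estimated, and fix $R,T$ with $T\ge C_0R$, $R\ge 1$. I would first note that Proposition~\ref{prop-2.3} and Proposition~\ref{prop-2.5} both apply to this very operator under exactly these hypotheses, and that — as was already remarked in their proofs — the exponent in each of them may be chosen arbitrarily large, with constants depending only on that exponent and on the structural constants in \textup{(\ref{eqn-1.2})}--\textup{(\ref{eqn-1.6})}. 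Applying both with exponent $2s$ gives simultaneously
\begin{equation*}
\cN \le C_{2s}\,R^{2s}T^{-2s}
\qquad\text{and}\qquad
\cN \le C_{2s}\,h^{2s}.
\end{equation*}

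Since $\cN\ge 0$, I would then multiply these two inequalities and take square roots, obtaining $\cN \le C_{2s}\,h^{s}R^{s}T^{-s}$; after renaming $C_s\coloneqq C_{2s}$ this is precisely \textup{(\ref{eqn-2.27})}. An alternative route would be to re-run the induction of Proposition~\ref{prop-2.3} using Proposition~\ref{prop-2.5} as the base of the induction in place of Proposition~\ref{prop-2.2}, so that the term $C\nu^N$ in \textup{(\ref{eqn-2.15})} — the only contribution there that does not decay in $h$ — is replaced by an $h$-small quantity and then absorbed into the remainder; but the geometric-mean argument is shorter and uses nothing beyond the two stated propositions.

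I do not expect a genuine obstacle at this point: all the real work (the energy estimates of Section~\ref{sect-2.2}, the construction of the propagation cones $\Omega^\pm$ and of the symbols $q^\pm,q^0$, and the rescaling iteration in the proof of Proposition~\ref{prop-2.3} yielding local energy decay) has already been done in Propositions~\ref{prop-2.1}--\ref{prop-2.5}. The only point requiring (routine) attention is the uniformity of the constants over the admissible range of $(R,T,h)$, which is clear from inspection of the proofs of those propositions.
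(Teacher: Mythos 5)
Your geometric-mean interpolation of Propositions~\ref{prop-2.3} and~\ref{prop-2.5} (take each with exponent $2s$, multiply, take the square root) is exactly what the paper means by ``combining'' the two propositions, and it is correct: from $0\le\cN\le C_{2s}R^{2s}T^{-2s}$ and $0\le\cN\le C_{2s}h^{2s}$ one gets $\cN^2\le C_{2s}^2 h^{2s}R^{2s}T^{-2s}$, hence \textup{(\ref{eqn-2.27})}. No gap; this matches the paper's route.
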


\section{Traces and the end of the proof}
\label{sect-2.3}

\begin{proposition}\label{prop-2.7}
In the framework of Theorem~\ref{thm-1.1} the following estimates hold for $T\ge 1$
\begin{align}
&|F_{t\to \tau}  \chi_T(t)  u (x,x,t) |\le C_s h^s (|x|+1) ^{s+1} ((|x|+1)+T)^{-s},
\label{eqn-2.28}\\
&|F_{t\to \tau}  \chi_T(t) \int \psi_R(x) u (x,x,t) \,dx|\le C_s h^s  T^{-s},
\label{eqn-2.29}\\
\intertext{provided $\psi \in \sC_0^\infty (B(0,1))$. Further,}
&|F_{t\to \tau}  \chi_T(t) \int \psi_R(x) u (x,x,t) \,dx|\le C_s h^s R^{-s} T^{-s},
\label{eqn-2.30}
\end{align}
provided $\psi \in \sC_0^\infty (B(0,1)\setminus B(0,\frac{1}{2})$.
\end{proposition}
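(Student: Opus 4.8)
The plan is to derive Proposition~\ref{prop-2.7} by feeding the operator-norm estimates of Corollary~\ref{cor-2.6} (together with Proposition~\ref{prop-2.1}) into the elementary identity expressing the localized trace of the propagator through the propagator itself. First I would write, for a cutoff $\psi$ supported in $B(0,1)$ and $\psi_R(x)=\psi(x/R)$,
\begin{equation*}
\int \psi_R(x)\, u(x,x,t)\,dx = \Tr\bigl(\psi_R\, e^{ih^{-1}tA}\,\psi_R\bigr)
\end{equation*}
and insert a spectral localizer: since we are computing $F_{t\to h^{-1}\tau}\chi_T(t)(\cdots)$ with $|\tau-\lambda|$ small and $\chi_T$ supported away from $t=0$, standard elliptic arguments let us replace $e^{ih^{-1}tA}$ by $\varphi_\varepsilon(A-\lambda)\,e^{ih^{-1}tA}\,\varphi_\varepsilon(A-\lambda)$ modulo $O(h^\infty)$ contributions, after which the integrand is $\Tr\bigl(\psi_R\varphi_\varepsilon(A-\lambda) e^{ih^{-1}tA}\varphi_\varepsilon(A-\lambda)\psi_R\bigr)$. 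The crucial point is that on $\supp\chi_T$ we have $|t|\asymp T$, and then (\ref{eqn-2.27}) of Corollary~\ref{cor-2.6}, applied with this $T$, bounds the operator norm $\|\psi_R e^{ih^{-1}tA}\varphi_\varepsilon(A-\lambda)\psi_R\|$ by $C_s h^s R^s T^{-s}$ as soon as $T\ge C_0R$.

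Next I would convert the operator-norm bound into a trace bound. The operator $\psi_R\varphi_\varepsilon(A-\lambda)\psi_R$ is trace class with trace norm $O(h^{-d}R^d)$ (its Weyl symbol is supported in $|x|\le R$, $|\xi|$ in a bounded set, with the symbol estimates recorded for $Q=\varphi_\varepsilon(A-\lambda)$ in \S\ref{sect-2.2}), so factoring $\psi_R e^{ih^{-1}tA}\varphi_\varepsilon(A-\lambda)\psi_R = \bigl(\psi_R e^{ih^{-1}tA}\varphi_\varepsilon(A-\lambda)^{1/2}\bigr)\bigl(\varphi_\varepsilon(A-\lambda)^{1/2}\psi_R\bigr)$ — or, more simply, writing $\psi_R = \psi_R\tilde\psi_R$ with a slightly larger cutoff and absorbing one copy into the trace-class factor — gives
\begin{equation*}
\bigl|\Tr\bigl(\psi_R\varphi_\varepsilon(A-\lambda) e^{ih^{-1}tA}\varphi_\varepsilon(A-\lambda)\psi_R\bigr)\bigr|\le \|\psi_R e^{ih^{-1}tA}\varphi_\varepsilon(A-\lambda)\psi_R\|\cdot C h^{-d}R^d .
\end{equation*}
Since the remaining $s$ is arbitrary, the extra factor $h^{-d}R^d$ is harmless and is absorbed by lowering $s$; integrating in $t$ over $\supp\chi_T$ (length $O(T)$) and taking the Fourier transform (which only costs another power of $T$, again absorbed) yields (\ref{eqn-2.29}). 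For (\ref{eqn-2.30}), with $\psi$ supported in the annulus $B(0,1)\setminus B(0,\tfrac12)$ the same chain applies but now two mechanisms each supply a factor $R^{-s}$: the localization $\psi_R$ forces $|x|\asymp R$, and then Proposition~\ref{prop-2.1} (finite speed of propagation in $x$, with the gain $h^sR^{-s}$ for $|x-y|\ge C_0T$ — here $x=y$ but the annular support combined with $T\ge C_0R$ lets us compare scales) together with Corollary~\ref{cor-2.6} gives the product $h^sR^{-s}T^{-s}$. Estimate (\ref{eqn-2.28}) is the pointwise (non-integrated) version: one localizes near a single point $\bar x$ with $|\bar x|\asymp R=|x|+1$, uses the pointwise bound $|u(x,x,t)|\le \|\psi_{x}\,\delta_x\|^2\cdot(\cdots)$ — i.e. the Schwartz kernel on the diagonal is controlled by $h^{-d}$ times the relevant operator norm, here localized at scale comparable to $R$ — producing the stated $h^s(|x|+1)^{s+1}((|x|+1)+T)^{-s}$ after booking one factor $(|x|+1)$ from the $h^{-d}\to h^s$ trade-off at the single scale $r=R$.

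I expect the main obstacle to be bookkeeping rather than conceptual: one must be careful that the spectral localizer $\varphi_\varepsilon(A-\lambda)$ can legitimately be inserted on $\supp\chi_T$ (this is where $u = F^{-1}d_\tau e$ and the stationary-phase/ellipticity argument enter, and where the condition $|\tau-\lambda|\le\epsilon$ is used so that $\varphi_\varepsilon(A-\lambda)$ does not kill the relevant spectral interval), and that the symbol bounds (\ref{eqn-2.25}) on $Q=\varphi_\varepsilon(A-\lambda)$ with the weight $(|x|+1)^{-|\beta|}$ are compatible with the rescaling $x\mapsto x/R$ used to invoke Corollary~\ref{cor-2.6} and Proposition~\ref{prop-2.4}. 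Once those are in place, (\ref{eqn-2.28})--(\ref{eqn-2.30}) follow by combining the operator-norm estimates with the trivial trace-class bound $O(h^{-d}R^d)$ and absorbing all polynomial losses into the free parameter $s$.
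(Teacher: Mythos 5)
Your overall strategy — feeding the operator-norm bounds of \S\ref{sect-2.2} into a trace (or diagonal) estimate via the trace-class norm $O(h^{-d}R^d)$ of the spectral localizer — is the right first move, and your treatment of (\ref{eqn-2.28}) is in the same spirit as the paper's one-line ``follows from (\ref{eqn-2.26})''. But there are two genuine gaps in the chain you propose for (\ref{eqn-2.29}) and (\ref{eqn-2.30}).

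First, you claim that (\ref{eqn-2.29}) follows directly: bound the norm by $C_s h^s R^s T^{-s}$ via (\ref{eqn-2.27}), multiply by the trace-class norm $C h^{-d}R^d$, and ``absorb the extra factor by lowering $s$''. This absorption does not work uniformly in $R$: for any choice of exponents you are left with a \emph{positive} power of $R$ (and, after using $T\ge C_0R$, a positive power of $T$), which cannot be traded against $h^s$ or $T^{-s}$ when $R$ and $T$ are independent free parameters. The paper avoids this entirely by proving (\ref{eqn-2.30}) first (which carries the decisive factor $R^{-s}$) and then deducing (\ref{eqn-2.29}) by summing a dyadic decomposition of $\psi_R$ into annular pieces, where the geometric series in $R_j^{-s}$ is what makes the $R$-dependence disappear. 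You never perform this reduction, and without it (\ref{eqn-2.29}) does not follow from (\ref{eqn-2.27}) alone. You also do not address the range $T<C_0R$, where (\ref{eqn-2.27}) is simply not available.

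Second, your derivation of (\ref{eqn-2.30}) leans on Proposition~\ref{prop-2.1}, but that proposition gives decay only when $|x-y|\ge C_0T$, while the trace integrand sits on the diagonal $x=y$; ``the annular support combined with $T\ge C_0R$ lets us compare scales'' does not close this. The paper's actual mechanism for the extra $R^{-s}$ in (\ref{eqn-2.30}) is the supplementary bound $|F_{t\to\tau}\chi_T\int\psi_R u\,dx|\le C_s h^s R^{-s}T$, obtained by \emph{choosing the time direction on each partition element} (the $Q^\pm,Q^0$ splitting of the proof of Proposition~\ref{prop-2.3}, and Chapter~4 of \cite{IvrMB}) so that $|x|\gtrsim R$ along the chosen direction of propagation; this is then combined with (\ref{eqn-2.26}). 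That time-direction argument is a distinct idea which your proposal does not supply, and Proposition~\ref{prop-2.1} is not a substitute for it.
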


\begin{proof}
Estimate (\ref{eqn-2.28}) follows immediately from (\ref{eqn-2.26}). Estimate (\ref{eqn-2.30}) follows from (\ref{eqn-2.26}) and
\begin{equation*} 
|F_{t\to \tau}  \chi_T(t) \int \psi_R(x) u (x,x,t)\,dx |\le C_s h^s R^{-s} T,
\end{equation*}
which holds because  we can chose the time direction on the  partition element (see Chapter~\ref{monsterbook-sect-4} of \cite{IvrMB}) and we chose the one in which $|x|\gtrsim R$ (which is possible; see the part of proof of Proposition~\ref{prop-2.3} dealing with $Q^\pm$ and $Q^0$). 

Finally, estimate (\ref{eqn-2.29}) follows from (\ref{eqn-2.30}).
\end{proof}

Then we immediately arrive to the following theorem, which in turn implies Theorem~\ref{thm-1.1}:

\begin{theorem}\label{thm-2.8}
In the framework of Theorem~\ref{thm-1.1} the following estimates hold
\begin{multline}
|\partial_\tau ^k \Bigl( e(x,x,\tau)-\sum _{n\le N-1} \kappa _n (x,\tau) h^{-d+n}\Bigr) |\le\\
C_N h^{-d+N} (|x|+1) ^{-N}   + C_s h^s (|x|+1) ^k
\label{eqn-2.31}
\end{multline}
and for $\psi\in \sC_0^\infty (B(0,1))$
\begin{equation}
|\partial_\tau ^k \int \Bigl( e(x,x,\tau)-\sum _{n\le N-1} \kappa _n (x,\tau) h^{-d+n}\Bigr)\psi_R (x) \,dx|\le
C_N h^{-d+N} .
\label{eqn-2.32}
\end{equation}                 
\end{theorem}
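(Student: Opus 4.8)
The plan is to derive Theorem~\ref{thm-2.8} from the operator-norm decay estimate in Corollary~\ref{cor-2.6} together with the trace estimates of Proposition~\ref{prop-2.7}, using the inverse Fourier transform representation \textup{(\ref{eqn-1.11})} of the $\tau$-derivatives of $e_h$. The essential point is that we now control $|F_{t\to h^{-1}\tau}\chi_T(t)u_h(x,x,t)|$ for \emph{all} $T\ge T_*$, not merely for $T$ in a bounded range, which is exactly what \textup{(\ref{eqn-2.28})} and \textup{(\ref{eqn-2.30})} provide.

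First I would fix $N$ and $k$ and split the integral in \textup{(\ref{eqn-1.11})} (with an extra factor $(-ih^{-1}t)^k$ and, for $k=0$, the Tauberian comparison written out directly) into the piece carrying $\bar\chi_T(t)$ and the tail piece carrying $(1-\bar\chi_T(t))$, with $T=T_*$ a small constant. The first piece, by \textup{(\ref{eqn-1.9})}/\textup{(\ref{eqn-2.5})} and the standard Tauberian machinery of Chapter~\ref{monsterbook-sect-4} of \cite{IvrMB}, reproduces $\partial_\tau^k$ of the asymptotic sum $\sum_{n\le N-1}\kappa_n(x,\tau)h^{-d+n}$ up to an error $O(h^{-d+N}\cdot h^{?})$ coming from truncating the expansion; this is entirely routine and contributes the first term on the right of \textup{(\ref{eqn-2.31})} after I sum the geometric-in-$|x|^{-1}$ gains supplied by \textup{(\ref{eqn-2.28})}. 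For the tail piece \textup{(\ref{eqn-1.12})} (times $(-ih^{-1}t)^k$), I would dyadically decompose $1-\bar\chi_{T_*}(t)=\sum_{j\ge 0}\chi_{2^jT_*}(t)$ and on each annulus $|t|\asymp 2^jT_*=:T$ apply \textup{(\ref{eqn-2.28})}: the integrand is bounded by $h^{-1-k}T^k\cdot C_sh^s(|x|+1)^{s+1}((|x|+1)+T)^{-s}$, and integrating over $|t|\asymp T$ costs a factor $T$. Choosing $s$ large (and larger than $k+2$, say $s=N+k+d+2$) makes the sum over $j$ converge and produces the bound $C_sh^{s-1-k}(|x|+1)^{s+1}((|x|+1))^{-(s-k-1)}\lesssim h^{s'}(|x|+1)^{k}$ after relabeling $s'$, which is the second term on the right of \textup{(\ref{eqn-2.31})}. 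Absorbing $h^{-d}$-type losses from the small-$T$ region into the constant $C_N$ completes the pointwise estimate.

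For \textup{(\ref{eqn-2.32})} the argument is the same but cleaner: I would integrate \textup{(\ref{eqn-1.11})} against $\psi_R(x)\,dx$, use \textup{(\ref{eqn-2.29})} in place of \textup{(\ref{eqn-2.28})} on the dyadic tail pieces (so the bound is $C_sh^sT^{-s}$ with no $|x|$-growth, $R\le$ const absorbed), and note that the $\kappa_n$-truncation error integrated over the fixed ball $B(0,R)$ is $O(h^{-d+N})$. Since now $\int|t|^k\cdot h^{-1-k}\cdot h^sT^{-s}\,dt$ over $|t|\gtrsim T_*$ converges for $s>k+1$ to something $O(h^{s-1-k})=O(h^{\infty})$, the tail is negligible and only the $h^{-d+N}$ truncation term survives, giving \textup{(\ref{eqn-2.32})}. (The estimate \textup{(\ref{eqn-2.30})} for $\psi$ supported in an annulus is not needed for \textup{(\ref{eqn-2.32})} itself but underlies the finite-speed bookkeeping that makes \textup{(\ref{eqn-2.29})} uniform in $R$.)

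The main obstacle, and the place that needs care rather than cleverness, is the bookkeeping of the interplay between the powers of $h$, the dyadic parameter $T$, and the weight $(|x|+1)$: one must choose $s$ in \textup{(\ref{eqn-2.28})} depending on the target $N$ and $k$ so that, after paying $h^{-1-k}$ from the Fourier kernel, paying $T^k$ from the factor $t^k$, and paying $T$ from the $dt$-integration on each annulus, the resulting series in $T=2^jT_*$ still converges and still leaves a positive net power of $h$ together with the claimed polynomial weight in $|x|$. A secondary subtlety is justifying the differentiation under the integral sign and the convergence of \textup{(\ref{eqn-1.11})} at $t\to\infty$ — but this is exactly guaranteed by the rapid decay \textup{(\ref{eqn-2.28})}/\textup{(\ref{eqn-2.29})} in $T$, so once the estimates above are in hand the interchange is automatic, and the derivation of Theorem~\ref{thm-1.1} from \textup{(\ref{eqn-2.31})}--\textup{(\ref{eqn-2.32})} (take $|x|$ bounded, let $N\to\infty$) is immediate.
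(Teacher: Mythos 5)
Your plan is the right one and matches the paper's: the paper treats Theorem~\ref{thm-2.8} as an immediate consequence of Proposition~\ref{prop-2.7} via the Fourier representation \textup{(\ref{eqn-1.11})}, and you have fleshed that out with a dyadic decomposition of $1-\bar\chi_{T_*}$ and a sum over scales. However, two bookkeeping slips make your final weight come out as $(|x|+1)^{k+2}$ rather than the $(|x|+1)^k$ claimed in \textup{(\ref{eqn-2.31})}, and the ``relabel $s'$'' step you invoke to fix it does not work: a surplus power of $(|x|+1)$ cannot be traded for a smaller power of $h$, since $|x|$ is unbounded.

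The two slips are as follows. First, you treat \textup{(\ref{eqn-2.28})} as though it were a pointwise bound on $u(x,x,t)$ for $|t|\asymp T$ and then ``integrate over $|t|\asymp T$, costing a factor $T$''. But the left-hand side of \textup{(\ref{eqn-2.28})} is $|F_{t\to\tau}\chi_T(t)u(x,x,t)|$, which already contains the $dt$-integration; that spurious extra factor $T\asymp(|x|+1)$ at the dominant scale costs you one unnecessary power of $(|x|+1)$. Second, for the $k$-th $\tau$-derivative the identity \textup{(\ref{eqn-1.11})} (taken with $n=k$) produces the weight $(-ih^{-1}t)^{k-1}$, not $(-ih^{-1}t)^k$; the off-by-one costs another $T\asymp(|x|+1)$. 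Correcting both, each dyadic piece contributes
\begin{equation*}
(2\pi h)^{-1}\, h^{-(k-1)}\,T^{\,k-1}\cdot C_s\,h^{s}(|x|+1)^{s+1}\bigl((|x|+1)+T\bigr)^{-s},
\end{equation*}
the sum over $T=2^jT_*$ is dominated by $T\asymp(|x|+1)$, and one obtains exactly $C_s h^{s-k}(|x|+1)^{k}$, which is the second term of \textup{(\ref{eqn-2.31})} after renaming $s$. With this correction your argument goes through, and the integrated estimate \textup{(\ref{eqn-2.32})} via \textup{(\ref{eqn-2.29})} is as you describe.
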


\section{Discussion}
\label{sect-2.4}

\begin{corollary}\label{cor-2.9}
Let conditions of Theorem~\ref{thm-1.1} be fulfilled. Assume that 
\begin{equation}
|D^\alpha _x V|\le c_\alpha (|\xi|+1)^m  (|x|+1)^{-d-|\alpha|-\delta}.
\label{eqn-2.33}
\end{equation}
Then the asymptotics of the Birman-Schwinger spectral shift function 
\begin{equation}
\N_h(\tau)\coloneqq \int \Bigl(e_h (x,x,\tau) -e^0_h (x,x,\tau)\Bigr)\,dx \sim \sum_{n\ge 0} \varkappa h^{-d+n}
\label{eqn-2.34}
\end{equation}
is infinitely differentiable with respect to $\tau$. Here $e^0_h(x,x,\tau)=\kappa_0^0 h^{-d}$ and $e^0_h(x,y,\tau)$ is the Schwartz kernel of spectral projector for $A^0(hD)$, and
\begin{equation}
\varkappa_n (\tau)=\int \bigl(\kappa_n(x,\tau)-\updelta_{n0}\kappa^0 \bigr)\,dx.
\label{eqn-2.35}
\end{equation}
\end{corollary}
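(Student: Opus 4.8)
The plan is to deduce Corollary~\ref{cor-2.9} from Theorem~\ref{thm-2.8}, whose estimates~\eqref{eqn-2.31}--\eqref{eqn-2.32} already control $\partial_\tau^k$ of the remainder in the complete asymptotics, both pointwise and integrated against a compactly supported cutoff. The only genuinely new issue is global integrability in $x$: we must integrate $e_h(x,x,\tau)-e_h^0(x,x,\tau)$ over all of $\bR^d$ (not just over a ball), and likewise justify that $\varkappa_n(\tau)$ in~\eqref{eqn-2.35} is a convergent integral. First I would observe that $e_h^0(x,x,\tau)=\kappa_0^0 h^{-d}$ is exactly $\kappa_0(x,\tau)h^{-d}$ evaluated with $V\equiv 0$; by the standard structure of the coefficients $\kappa_n$ as universal expressions in $A^0$ and the $x$-derivatives of $V$ (together with $V$ itself entering only through the difference $A^0+V-\tau$), and using assumption~\eqref{eqn-2.33} — which is precisely~\eqref{eqn-1.5} sharpened to decay rate $-d-\delta$ — one gets
\begin{equation}
|\partial_\tau^k\bigl(\kappa_n(x,\tau)-\updelta_{n0}\kappa^0\bigr)|\le C_n (|x|+1)^{-d-\delta}\qquad\forall n\ge 0,
\label{eqn-disc-1}
\end{equation}
so each $\varkappa_n(\tau)$ in~\eqref{eqn-2.35} is well defined and $C^\infty$ in $\tau$. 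For $n=0$ this uses that $\kappa_0(x,\tau)-\kappa^0$ is $O((|x|+1)^{-\delta})$ times something better; in fact the leading Weyl term difference already decays like $V$, i.e. like $(|x|+1)^{-d-\delta}$ under~\eqref{eqn-2.33}, because $A^0$ is $x$-independent and the difference $\theta(\tau-A^0-V)-\theta(\tau-A^0)$ is, to leading order, $-V\cdot\delta(\tau-A^0)$ on the symbol level.

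Next I would handle the tail of the $x$-integral of the remainder. Write a dyadic partition $1=\psi_{R_0}(x)+\sum_{j\ge 0}\phi_{R_j}(x)$ with $R_j\asymp 2^j$, where $\psi\in\sC_0^\infty(B(0,1))$ and $\phi\in\sC_0^\infty(B(0,1)\setminus B(0,\tfrac12))$. On the central ball, \eqref{eqn-2.32} (with $R=R_0$ a fixed constant) gives $|\partial_\tau^k\int(\,\cdot\,)\psi_{R_0}\,dx|\le C_N h^{-d+N}$ for every $N$. On each annulus $|x|\asymp R_j$, the pointwise bound~\eqref{eqn-2.31} — taken with $N$ replaced by some large $N'$ and with $T=\infty$, i.e. just using the term $C_{N'}h^{-d+N'}(|x|+1)^{-N'}+C_s h^s(|x|+1)^k$ — yields, after multiplying by the volume $\asymp R_j^d$ of the annulus and by~\eqref{eqn-disc-1} for the difference of the $\kappa_n$'s themselves,
\begin{equation}
\Bigl|\partial_\tau^k\int\Bigl(e_h-\textstyle\sum_{n\le N-1}\kappa_n h^{-d+n}\Bigr)\phi_{R_j}\,dx\Bigr|\le C\bigl(h^{-d+N'}R_j^{d-N'}+h^s R_j^{d+k}\bigr)+C R_j^{-\delta}h^{-d+N},
\label{eqn-disc-2}
\end{equation}
and summing over $j\ge 0$ converges (choosing $N'>d$ for the first term, $s$ large for the second; the last term already carries the $R_j^{-\delta}$ from~\eqref{eqn-disc-1} combined with the $\kappa_n$ count), giving a bound $O(h^{-d+N})$ for the full tail after incorporating the $\sum_{n<N}(\kappa_n-\updelta_{n0}\kappa^0)h^{-d+n}$ contributions via~\eqref{eqn-disc-1}. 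Adding the central-ball estimate, I conclude
\begin{equation}
\Bigl|\partial_\tau^k\Bigl(\N_h(\tau)-\textstyle\sum_{n\le N-1}\varkappa_n(\tau)h^{-d+n}\Bigr)\Bigr|\le C_{N,k}h^{-d+N}\qquad\forall N,k,
\label{eqn-disc-3}
\end{equation}
which is exactly the assertion that~\eqref{eqn-2.34} holds and is infinitely differentiable in $\tau$.

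The main obstacle, and the step deserving care rather than a one-line citation, is~\eqref{eqn-disc-1} for $n=0$: one must check that $\kappa_0(x,\tau)-\kappa^0$ genuinely decays at the rate $(|x|+1)^{-d-\delta}$ rather than merely $(|x|+1)^{-\delta}$ inherited from~\eqref{eqn-1.5}. This is where the strengthened hypothesis~\eqref{eqn-2.33} is used: $\kappa_0(x,\tau)=(2\pi)^{-d}\int\theta(\tau-A^0(\xi)-V(x,\xi))\,d\xi$ (interpreted as the matrix trace), so $\kappa_0(x,\tau)-\kappa^0$ is a smooth functional of $V(x,\cdot)$ vanishing at $V=0$, hence $O(\sup_\xi|V(x,\xi)|(|\xi|+1)^{-m}\cdot(\text{something integrable in }\xi))=O((|x|+1)^{-d-\delta})$, with the same estimate for $\partial_\tau^k$ since each $\tau$-derivative lands on $\theta$-derivatives against the smooth fibered measure. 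For $n\ge 1$ the coefficient $\kappa_n$ is already $O((|x|+1)^{-\delta-1})$ by~\eqref{eqn-1.5} (each $\kappa_n$ for $n\ge1$ involves at least one $x$-derivative of $V$, gaining a factor $(|x|+1)^{-1}$), and since $d\ge 1$ this is stronger than $(|x|+1)^{-d-\delta}$ only when $d=1$; in higher dimensions one instead notes that $\kappa_n$ for $n\ge1$ is a sum of products containing at least one factor $V$ or $D_x V$ and hence, under~\eqref{eqn-2.33}, decays like $(|x|+1)^{-d-\delta}$ as well. Once~\eqref{eqn-disc-1} is in hand, everything else is the dyadic summation above, which is routine. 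I would present the proof as: (i) the structural/decay lemma~\eqref{eqn-disc-1}; (ii) the dyadic decomposition plus~\eqref{eqn-2.31}--\eqref{eqn-2.32}; (iii) summation to~\eqref{eqn-disc-3}.
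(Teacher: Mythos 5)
Your overall strategy — deduce the corollary from Theorem~\ref{thm-2.8} together with the observation that~(\ref{eqn-2.33}) forces the coefficient differences $\kappa_n-\updelta_{n0}\kappa^0$ to decay integrably — is exactly what the paper's (one-sentence) proof asserts: ``condition (\ref{eqn-2.33}) guarantees the absolute convergence of integrals in (\ref{eqn-2.35}).'' Your lemma~(\ref{eqn-disc-1}) is the content of that remark, and your justification of it (on the symbol level, $\kappa_0-\kappa^0$ is to leading order linear in $V$, and the higher $\kappa_n$ contain $x$-derivatives of $V$) is correct and is the right thing to spell out.

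However, there is a genuine gap in the dyadic summation step. In~(\ref{eqn-disc-2}) the contribution of the $C_s h^s(|x|+1)^k$ term of~(\ref{eqn-2.31}) to the annulus $|x|\asymp R_j$ is $C h^s R_j^{d+k}$, and you claim the sum $\sum_j h^s R_j^{d+k}$ ``converges\dots\ choosing $s$ large.'' It does not: $s$ is a power of $h$, not of $R_j$, so for any fixed $h$ and $s$ the series $\sum_j R_j^{d+k}$ diverges as $R_j\asymp 2^j\to\infty$. This is not a cosmetic issue — the pointwise bound~(\ref{eqn-2.31}) is intrinsically non-integrable over $\bR^d$ because of the $(|x|+1)^k$ factor, and no choice of exponents repairs it. The fix is to abandon~(\ref{eqn-2.31}) on the annuli in favour of the \emph{integrated} estimates: note that~(\ref{eqn-2.32}) is stated for any $\psi\in\sC_0^\infty(B(0,1))$ with a constant uniform in $R$, so it already applies to each annular cutoff $\phi_{R_j}$; but by itself it gives only a uniform $O(h^{-d+N})$ per annulus, which again does not sum. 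What you actually need is the annular refinement implicit in~(\ref{eqn-2.30}) of Proposition~\ref{prop-2.7}: the extra $R^{-s}$ factor there propagates, through the same Fourier-inversion argument that produces~(\ref{eqn-2.31})--(\ref{eqn-2.32}), into an estimate of the form $|\partial_\tau^k\int(e_h-\sum_{n<N}\kappa_n h^{-d+n})\phi_{R_j}\,dx|\le C_{N,s}\,h^{-d+N}R_j^{-s}$ for annular $\phi$, and that \emph{does} sum over $j$. So the architecture of your proof is right, but step (ii) should be run through~(\ref{eqn-2.30}) and~(\ref{eqn-2.32}) rather than through the pointwise bound~(\ref{eqn-2.31}).
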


Indeed, condition (\ref{eqn-2.33}) guarantees the absolute convergence of integrals in (\ref{eqn-2.35}).

\begin{remark}\label{rem-2.10}
Our results could be easily generalized to non-semi-bounded elliptic $A^0$ (like in Subsection~\ref{IDS-sect-3.1} of \cite{IvrIDS}). Then instead of $e(x,y,\lambda)$ one needs to consider $e(x,y,\lambda,\lambda')$ the Schwartz kernel of $\uptheta (\lambda - A)-\uptheta (\lambda' - A)$
and \underline{either} impose conditions for both $\lambda$ and $\lambda'$, \underline{or} only for $\lambda$ and mollify with respect to~$\lambda'$. 
\end{remark}

It looks strange that the last term in the remainder estimate (\ref{eqn-2.31}) increases as $|x|$ increases, but  
so far I can neither improve it to the uniform with respect to $x$ in the general case, nor show by the counter-example that such improvement is impossible. However I hope to prove 

\begin{conjecture}\label{conj-2.11}
Assume in addition that $A^0$ is a scalar operator and  $\Sigma^0_\lambda =\{\xi\colon A^0(\xi)=\lambda\}$ is  a \emph{strongly convex surface\/} i.e.
\begin{equation}
\pm \sum_{j,k} A^0_{\xi_j\xi_k}(\xi)\eta_j\eta_k \ge \epsilon|\eta|^2\qquad
 \forall \xi\in\Sigma^0_\lambda\ \ \forall \eta\colon \sum_j  A^0_{\xi_j}(\xi)\eta_j=0,
\label{eqn-2.36}
\end{equation}
where the sign depends on the connected component of $\Sigma_\lambda$, containing $\xi$.

Then  the last term in the right-hand expression of \textup{(\ref{eqn-2.31})} could be replaced by 
$C_s h^{s} (|x|+1) ^{k-d}$.
\end{conjecture}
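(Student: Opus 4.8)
The plan is to localise around the shell $\{|x|\asymp R\}$ with $R:=|x|+1$, where $A$ is a genuinely small perturbation of the convex constant--coefficient operator, and to bring in the dispersion of the free flow. Tracing through the proof of Theorem~\ref{thm-2.8}, the offending term $C_sh^s(|x|+1)^k$ comes from the remainder (\ref{eqn-1.12}): one decomposes $1-\bar\chi_{T^*}$ dyadically into pieces $\chi_{T_j}$ with $T_j=2^jT^*$, where $T^*=T^*(R)\asymp R$, inserts the weight $(h^{-1}t)^{k-1}$ from (\ref{eqn-1.11}), and estimates each piece by (\ref{eqn-2.28}). The sum is dominated by $T_j\asymp R$, where (\ref{eqn-2.28}) only gives $|F_{t\to h^{-1}\tau}\chi_{T}(t)u_h(x,x,t)|\le C_sh^sR$. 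So it suffices to prove, for $\tau$ near $\lambda$ and $T\ge C_0R$, the sharper bound
\begin{equation*}
|F_{t\to h^{-1}\tau}\chi_{T}(t)u_h(x,x,t)|\le C_sh^sR^{1-d}(R/T)^s ,
\end{equation*}
the gain over (\ref{eqn-2.28}) being the factor $R^{-d}$; feeding this back into the weighted dyadic sum then replaces $C_sh^s(|x|+1)^k$ by $C_sh^s(|x|+1)^{k-d}$.

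To produce the factor $R^{-d}$ I would rescale: set $x=R\tilde x$, $t=R\tilde t$, $h=R\tilde h$. Conjugating $A_h$ by the dilation gives $\tilde A=A^0(\tilde hD)+\tilde V(\tilde x,\tilde hD)$ with $\tilde V(\tilde x,\xi)=V(R\tilde x,\xi)$, and by (\ref{eqn-1.5}) one has $|D^\alpha_\xi D^\beta_{\tilde x}\tilde V(\tilde x,\xi)|\le C_{\alpha\beta}R^{-\delta}(|\xi|+1)^m$ on $\{|\tilde x|\asymp1\}$ --- a genuinely small ($C^\infty$, and semiclassically with parameter $\tilde h=h/R$) perturbation of the fixed operator $A^0(\tilde hD)$, uniformly in $R$. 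Since $u_h(x,x,t)=R^{-d}\tilde u_{\tilde h}(\tilde x,\tilde x,\tilde t)$, one has $F_{t\to h^{-1}\tau}\chi_T(t)u_h(x,x,t)=R^{1-d}F_{\tilde t\to\tilde h^{-1}\tau}\chi_{\tilde T}(\tilde t)\tilde u_{\tilde h}(\tilde x,\tilde x,\tilde t)$ with $\tilde T=T/R\ge C_0$. The prefactor $R^{1-d}$ already carries $R^{-d}$, so the whole matter reduces to the $R$--uniform estimate
\begin{equation*}
|F_{\tilde t\to\tilde h^{-1}\tau}\chi_{\tilde T}(\tilde t)\tilde u_{\tilde h}(\tilde x,\tilde x,\tilde t)|\le C_s\tilde h^s\tilde T^{-s},\qquad |\tilde x|\asymp1,\ \tilde T\ge C_0 .
\end{equation*}

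This is where strong convexity of $\Sigma^0_\lambda$ enters. For the unperturbed $A^0(\tilde hD)$ the estimate is immediate, since $F_{\tilde t\to\tilde h^{-1}\tau}\chi_{\tilde T}(\tilde t)u^0_{\tilde h}(\tilde x,\tilde x,\tilde t)=(2\pi\tilde h)^{-d}\int\hat\chi_{\tilde T}\bigl(\tilde h^{-1}(\tau-A^0(\xi))\bigr)\,d\xi=O\bigl(\tilde h^{1-d}(\tilde h/\tilde T)^\infty\bigr)$ because $\chi$ vanishes near $0$ and the density of states is $C^\infty$ near $\tau$ (microhyperbolicity forces $|\nabla A^0|\ge\epsilon_0$ on $\Sigma^0_\lambda$). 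Strong convexity is what makes this robust under the perturbation with constants depending only on the $c_{\alpha\beta}$ and the curvature bound $\epsilon$: the frozen symbols $A^0(\xi)+\tilde V(\tilde x_0,\xi)$ cut out surfaces $C^\infty$--close to $\Sigma^0_\lambda$, hence still strongly convex, so the rescaled Hamiltonian flow on $\Sigma_\tau$ is a uniformly nondegenerate, caustic--free, self--intersection--free (over the diagonal, a strengthening of (\ref{eqn-2.3})) flow. For bounded $\tilde T$ the claim then follows from WKB/stationary--phase analysis for this nearly--constant--coefficient operator --- the phase of $\tilde u_{\tilde h}(\tilde x,\tilde x,\cdot)$ restricted to energies near $\tau$ has no critical point, with gradient and all higher derivatives controlled uniformly by curvature --- and for $\tilde T\gg1$ from iterating the propagation estimates of Section~\ref{sect-2.2} applied to $\tilde A$, now \emph{upgraded} from the $L^2$--operator--norm bounds of Corollary~\ref{cor-2.6} to bounds on the diagonal of the kernel. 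Indeed, the $R^{-d}$ gain is exactly the gap between the crude step ``operator norm $\Longrightarrow$ value of the kernel on the diagonal'', which costs a factor $\tilde h^{-d}=R^dh^{-d}$ and merely reproduces (\ref{eqn-2.28}), and the dispersive estimate, which delivers the diagonal value as $O(\tilde h^\infty)$ directly.

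The main obstacle is precisely this upgrade: turning the operator--norm estimates of Section~\ref{sect-2.2} into pointwise estimates for $\tilde u_{\tilde h}(\tilde x,\tilde x,\tilde t)$ holding uniformly over the long rescaled time $\tilde T$ and uniformly in $R$. Over $\tilde T\gg1$ the small perturbation accumulates, so a Duhamel expansion does not converge; one has to propagate a wave--packet decomposition along the strongly convex flow and show that, although the total mass stays $\asymp\tilde h^{-d}$, the part returning to the $\tilde h$--neighbourhood of $(\tilde x,\bar\xi)$ is $O(\tilde h^\infty)$ --- and strong convexity is what forbids the focusing caustics near $\tilde x$ that could otherwise destroy this bound, which is why in the general case Theorem~\ref{thm-2.8} can only offer $(|x|+1)^k$. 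Two further, routine, points then need attention: the rescaled symbol $\tilde V$ is not literally in the admissible class near $\tilde x=0$ (or for $|\tilde x|\gg1$), so one first truncates it away from the shell and controls the difference by finite propagation speed; and the two time regimes $\tilde T\asymp1$ and $\tilde T\gg1$, together with the weighted dyadic sum carrying the factors $(h^{-1}t)^{k-1}$, have to be glued --- straightforward once the $\tilde T^{-s}$ decay is in hand.
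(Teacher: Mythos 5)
The statement you were given is labelled a \emph{Conjecture}: the paper offers no proof, and the author explicitly says in the preceding paragraph that he ``can neither improve it to the uniform with respect to $x$ in the general case, nor show by the counter-example that such improvement is impossible,'' supplying only a one-sentence heuristic (few Hamiltonian trajectories from $|x|=R\gg1$ return near the origin, and they do not linger). There is therefore no proof in the paper to compare against.

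Your proposal is consistent with that heuristic and identifies the right structural mechanism --- rescale to the shell $|x|\asymp R$, reduce to a uniform estimate for the dilated operator with semiclassical parameter $\tilde h=h/R$, and let the Jacobian factor $R^{-d}$ of the dilated kernel produce the desired gain --- and you correctly localise where strong convexity must enter (a dispersive/no-caustic estimate for the diagonal of the propagator kernel over long rescaled times, rather than the $\sL^2$-operator-norm bounds of Propositions~\ref{prop-2.3}--\ref{prop-2.5}, whose crude conversion to pointwise bounds costs $\tilde h^{-d}=R^d h^{-d}$ and only reproduces \textup{(\ref{eqn-2.28})}). But this is a roadmap, not a proof: the step you yourself flag as ``the main obstacle'' --- establishing that the part of a wave-packet decomposition returning to the $\tilde h$-neighbourhood of $(\tilde x,\bar\xi)$ stays $O(\tilde h^\infty)$ over arbitrarily long rescaled times $\tilde T$, uniformly in $R$, with Duhamel unavailable because the $O(R^{-\delta})$ perturbation accumulates --- is precisely the open content of the conjecture, and you leave it unresolved. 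Until that estimate is actually proved (with explicit control of how the convexity bound $\epsilon$ in \textup{(\ref{eqn-2.36})} propagates to uniform nondegeneracy of the phase over the dyadic sum of time scales, and with a careful treatment of the truncation of $\tilde V$ away from $|\tilde x|\asymp1$), what you have is a plausible plan of attack rather than a proof. It is a sensible plan, arguably more concrete than the paper's rationale, and not obviously wrong --- but the conjecture remains a conjecture.
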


Rationale here is that only few Hamiltonian trajectories from $x$ with $|x|=R\gg 1$ pass close to the origin and even they do not spend much time there.

\end{document}